\newtheorem{theorem}{Theorem}         
\numberwithin{theorem}{section}
\newtheorem{lemma}[theorem]{Lemma}
\newtheorem{proposition}[theorem]{Proposition}
\newtheorem{corollary}[theorem]{Corollary}
\theoremstyle{definition}    			
\newtheorem{remark}[theorem]{Remark}
\newtheorem*{acknowledgement}{Acknowledgement}
\newcommand{\torus}{ {\mathbb T^d_N} }    
\newcommand{\lattice}{ {\mathbb Z^d} }       
\numberwithin{equation}{section}   
\begin{document}

\thispagestyle{empty}

\begin{center}
{\bf LOCAL PICTURE AND LEVEL-SET PERCOLATION OF THE GAUSSIAN FREE FIELD ON A LARGE DISCRETE TORUS} \\   
\vspace{0.4cm} 
{Angelo Ab\"acherli} \\

\end{center}

\vspace{1cm}
 
\begin{abstract}
In this article we obtain for $d\geq 3$ an approximation of the zero-average Gaussian free field on the discrete $d$-dimensional torus  of large side length $N$ by the Gaussian free field on $\lattice$, valid in boxes of roughly side length $N-N^\delta$ with $\delta \in (\frac12,1)$.

As an implication, the  level sets of the zero-average Gaussian free field on the torus can be approximated by the level sets of the Gaussian free field on $\lattice$. This leads to a series of applications related to level-set percolation. \end{abstract}

\vfill

\noindent Departement Mathematik   \\
ETH Z\"urich \\
R\"amistrasse 101 \\
8092 Z\"urich, Switzerland

\newpage
\thispagestyle{empty}
\mbox{}
\newpage

\setcounter{page}{1}

\section{Introduction}  \label{section0}

We consider the zero-average Gaussian free field on the discrete $d$-dimensional torus of side length $N$ and fixed dimension $d\geq 3$. For large $N$, we show that  it can be approximated by the Gaussian free field on $\lattice$ in macroscopic boxes of side length of order $N-N^\delta$ for $\delta \in (\frac 12, 1)$,  thus yielding the local picture of the zero-average Gaussian free field. This readily provides an approximation of the level sets of the zero-average Gaussian free field on the torus by level sets of the Gaussian free field on $\lattice$, which in turn allows us to relate their respective percolative properties.

The general idea of tackling questions about large finite probabilistic models by comparing them to corresponding better understood infinite models has been fruitfully applied over the years in areas such as interacting particle systems (see e.g.~\cite{24}), combinatorial probability (see e.g.~\cite{25}) and spectral theory (see e.g.~\cite{26}).
Recently, this technique has been applied to the model of random interlacements on transient graphs \cite{14}, which can be used to approximate the trace of simple random walk on large finite graphs. In this way, the local picture of  the vacant set of simple random walk on finite graphs (see for example \cite{18}, \cite{15}, \cite{32}) and/or  percolative properties of it (see for example \cite{21}, \cite{5}, \cite{19}, \cite{6}) have been investigated. Some of our results are of similar flavour to \cite{5} and \cite{6} but in the Gaussian free field setting.

Level-set percolation for the Gaussian free field is a significant representative of a percolation model with long-range dependencies. It has attracted attention for a long time, dating back to \cite{31}, \cite{28} and \cite{12}. More recent developments can be found for instance in \cite{29}, \cite{7}, \cite{23}, \cite{13} and \cite{22}. Also, some simulations of the critical value of level-set percolation were performed, see \cite{30}.

We now describe our results in more details. The graphs considered in this work are the discrete tori $\torus \coloneqq (\mathbb Z / N \mathbb Z)^d$, $N\geq 1$, and the discrete lattice $\lattice$, both for dimensions $d\geq 3$ and endowed with the usual nearest-neighbour structure. On $\torus$ we consider the zero-average Gaussian free field (see Subsection \ref{subsection1.3} for more details about it) with law $\mathbb P^\torus$ on $\mathbb R^\torus$ and canonical coordinate process $(\Psi_\torus(x))_{x \in \torus}$ so that,
\begin{equation} \label{0.1}
\begin{split}
&\text{under $\mathbb P^\torus$, $(\Psi_\torus(x))_{x \in \torus}$ is a  centered Gaussian field on $\torus$} \\
&\text{with covariance $\mathbb E^\torus [\Psi_\torus(x) \Psi_\torus(y)] = G_\torus(x,y)$ for all $x,y \in \torus$}
\end{split}
\end{equation}
where $G_\torus(\cdot,\cdot)$ is the zero-average  Green function, see \eqref{1.5}. On the other hand, on $\lattice$ we have the Gaussian free field with law $\mathbb P^\lattice$ on $\mathbb R^\lattice$ and canonical coordinate process $(\varphi_\lattice(x))_{x \in \lattice}$ so that,
\begin{equation} \label{0.2}
\begin{split}
&\text{under $\mathbb P^\lattice$, $(\varphi_\lattice(x))_{x \in \lattice}$ is a  centered Gaussian field on $\lattice$} \\
&\text{with covariance $\mathbb E^\lattice [\varphi_\lattice(x) \varphi_\lattice(y)] = g_\lattice(x,y)$ for all $x,y \in \lattice$}
\end{split}
\end{equation}
where $g_\lattice(\cdot,\cdot)$ stands for the Green function of simple random walk on $\lattice$, see \eqref{1.1} and again Subsection \ref{subsection1.3} for more details.

Our main result is the following: for $\delta \in (\frac 12,1)$ we let $\mathcal B_N^\delta$ denote a box of roughly side length $N-N^\delta$ in $\lattice$ centered at the origin. Moreover, we let $B_N^\delta$ be the box in $\torus$ obtained from $\mathcal B_N^\delta$ via the canonical projection $\pi_N: \lattice \to \torus$. For $x \in B_N^\delta \subseteq \torus$ we denote  the unique element in $\pi_N^{-1}(\{ x\}) \cap \mathcal B_N^\delta$ by $\widehat x \in \mathcal B_N^\delta \subseteq \lattice$. We will prove in Theorem \ref{theorem2.2} that
\begin{equation} \label{0.3}
\begin{split}
&\text{there exist couplings $\mathbb Q_N$, $N\geq1$, of the zero-average Gaussian  free field $\Psi_\torus$} \\
&\text{on $\torus$ and the Gaussian free field $\varphi_\lattice$ on $\lattice$ such that for every $\delta \in \textstyle (\frac12,1)$} \\
&\text{and $\varepsilon>0$, $\lim_{N \to \infty} \mathbb Q_N \big[ \textstyle \sup_{x \in  B_N^\delta} \big| \Psi_\torus(x) - \varphi_\lattice(\widehat x)  \big|  >  \varepsilon \big] =0 $.}
\end{split}
\end{equation}
We refer to Section \ref{section2}, Theorem \ref{theorem2.2}, for the precise (and more quantitative) statement. 
A direct consequence of $\eqref{0.3}$ is an approximation of the level sets $E_{\Psi_\torus}^{\geq h} \coloneqq \{x\in \torus \, | \,  \Psi_\torus(x)\geq h\}$ of $\Psi_\torus$ restricted to $B_N^\delta$ by level sets $E_{\varphi_\lattice}^{\geq h} \coloneqq \{x\in \lattice \, | \, \varphi_\lattice(x)\geq h\}$ of $\varphi_\lattice$ restricted to $\mathcal B_N^\delta$. In essence, we show in Corollary \ref{corollary2.3} that
\begin{equation} \label{0.4}
\begin{split}
&\text{the couplings $\mathbb Q_N$, $N\geq 1$, in \eqref{0.3} satisfy that for every $\delta \in \textstyle (\frac12,1)$, $\varepsilon >0$ and} \\
&\text{$h \in \mathbb R$, $\lim_{N \to \infty} \mathbb Q_N \Big[ \pi_N \big( E_{\varphi_\lattice}^{\geq h+\varepsilon} \cap \mathcal B_N^\delta  \big)  \subseteq  \big( E_{\Psi_\torus}^{\geq h} \cap  B_N^\delta \big) \subseteq \pi_N \big( E_{\varphi_\lattice}^{\geq h-\varepsilon} \cap \mathcal B_N^\delta \big)   \Big]=1$.} 
\end{split}
\end{equation}
The result \eqref{0.4} provides an analogue (for level sets of the Gaussian free fields $\Psi_\torus$ and $\varphi_\lattice$) of the approximation of the vacant set of simple random walk on $\torus$ by the vacant set of random interlacements on $\lattice$ obtained first in \cite{5}, Theorem 1.1, for boxes of side length $N^{1-\delta}$ with $\delta \in (0,1)$, and later improved to boxes of side length $(1-\delta)N$  with $\delta \in (0,1)$  in \cite{6}, Theorem 1.2. Note that in our setting the approximation of the level sets of $\Psi_\torus$ by level sets of $\varphi_\lattice$ holds for boxes of even larger size $(1-N^{\delta-1})N$ with $\delta \in (\frac12,1)$.

Incidentally, let us mention that one cannot expect an approximation of $\Psi_\torus$ by $\varphi_\lattice$ as in $\eqref{0.3}$ that goes `up to the boundary of the torus' (i.e.~in boxes of side length $N-\delta$ with $\delta \geq 0$) due to the differences in the global structure of $\torus$ and $\lattice$ emerging at this scale, see Remark \ref{extraremark1}.

The approximation \eqref{0.4} leads to a series of applications related to level-set percolation of the Gaussian free field (similar to \cite{5} and \cite{6} for random interlacements). As a reminder, the critical value of level-set percolation of $\varphi_\lattice$ can be defined by (see \cite{7}, equation (0.4))
\begin{equation} \label{0.5}
h_\star \coloneqq \inf \Big \{  h\in \mathbb R \, \Big| \,   \mathbb P^\lattice \big[ 0 \xleftrightarrow{\varphi_\lattice \geq h} \infty \big] = 0 \Big \},
\end{equation}
where $\{ 0 \xleftrightarrow{\varphi_\lattice \geq h} \infty \}$ denotes the event of the existence of an infinite connected component of the level set $E_{\varphi_\lattice}^{\geq h}$ containing $0 \in \lattice$. Moreover, for values $h\in \mathbb R$ above a second critical parameter $h_{\star\star}$ (defined in \cite{7}, equation (0.6)), the connectivity function $\mathbb P^\lattice [ 0 \xleftrightarrow{\varphi_\lattice \geq h} x ]$ of the $h$-level set (i.e.~the probability of 0 and $x \in \lattice$ being in the same connected component of $E_{\varphi_\lattice}^{\geq h}$) decays fast in $|x|$ (see \cite{7}, Theorem 2.6, later improved to an exponential decay when $d\geq 4$, with a logarithmic correction when $d=3$, in \cite{13}, Theorem 2.1). It is known that $0\leq h_\star \leq h_{\star\star} < \infty$ for $d\geq 3$ (see \cite{12}, Theorem 3, and \cite{7}, Theorem 2.6 and Corollary 2.7). Additionally, $h_\star >0$ for $d$ large enough (see \cite{7}, Theorem 3.3, and \cite{23}, Theorem 0.2 and Theorem 0.3). An open question remains if $h_\star >0$ for all $d\geq 3$ and whether actually $h_\star = h_{\star\star}$ (cf.~\cite{23}). For what concerns level-set percolation, we will show in Theorem \ref{theorem3.1} that
\begin{equation} \label{0.6}
\begin{split}
&\text{in the subcritical phase $h>h_\star$,  with high probability for large $N$, the} \\
&\text{level set $E_{\Psi_\torus}^{\geq h}$ of $\Psi_\torus$ contains no macroscopic connected component} \\
&\text{(i.e.~of size comparable to the volume of the torus).}
\end{split}
\end{equation}
Furthermore, Theorem \ref{theorem3.2} states that								
\begin{equation} \label{0.7}
\begin{split}
&\text{for $h>h_{\star\star}$, with high probability for large $N$, the connected compo-} \\
&\text{nents of the level set $E_{\Psi_\torus}^{\geq h}$ are all microscopic  (i.e.~of negligible size} \\
&\text{compared to to the volume of the torus).}
\end{split}
\end{equation}
On the other hand, Theorem \ref{theorem3.4} proves that
\begin{equation} \label{0.8}
\begin{split}
&\text{in the supercritical phase $h < h_\star$, with high probability for large $N$,} \\
&\text{the level set $E_{\Psi_\torus}^{\geq h}$ of $\Psi_\torus$ contains a macroscopic connected compo-} \\
&\text{nent in diameter sense.}
\end{split}
\end{equation}
Let us briefly comment on the proof of the main Theorem \ref{theorem2.2} (corresponding to \eqref{0.3}). The starting point is a coupling of $\Psi_\torus$ and $\varphi_\lattice$ based on conditional distributions of the two fields (Lemma \ref{lemma1.2}). It then remains to bound the variances of the averages of $\Psi_\torus$ and $\varphi_\lattice$ appearing in this coupling (Proposition \ref{proposition2.1}), which relies on the long-range decay of the covariance functions of $\Psi_\torus$ and $\varphi_\lattice$, something well known in the case of $\varphi_\lattice$ (see \eqref{1.2}). For $\Psi_\torus$ this key ingredient is established in Proposition \ref{proposition1.1}. 

The structure of the article is as follows. In Section \ref{section1} we introduce the notation, recall basic facts on the Green functions and Gaussian free fields, and prove the long-range decay of the covariance function of $\Psi_\torus$. Then in Section \ref{section2} we deduce the main approximation result (Theorem \ref{theorem2.2}, corresponding to \eqref{0.3}) and its corollary on level sets (Corollary \ref{corollary2.3}, corresponding to \eqref{0.4}). We conclude in Section \ref{section3} with the applications to level-set percolation (Theorems \ref{theorem3.1}, \ref{theorem3.2} and \ref{theorem3.4}, corresponding to \eqref{0.6}, \eqref{0.7} and \eqref{0.8}).

A final word on the convention followed concerning constants. By $c,c',\ldots$, we denote  positive constants with values changing from place to place which only depend on the dimension $d$. 

\section{Notation and useful results}  \label{section1}

We start this section with further notation. In Subsection \ref{subsection1.1} resp.~Subsection \ref{subsection1.2} we then collect basic properties and definitions related to random walks and Green functions on $\lattice$ resp.~$\torus$. In particular, we obtain in Subsection \ref{subsection1.2} some bound on the long-range decay of the zero-average Green function (which plays the role of the covariance function of $\Psi_\torus$), see Proposition \ref{proposition1.1}. This will be particularly useful to prove the main result Theorem \ref{theorem2.2} (corresponding to $\eqref{0.3}$) in Section \ref{section2}. Finally, in Subsection \ref{subsection1.3} we provide a concise expression relating the laws of $\varphi_\lattice$ and $\Psi_\torus$, see Lemma \ref{lemma1.2}, the starting point for the construction of the couplings in \eqref{0.3} carried out in Section \ref{section2}.

As mentioned above, we consider $\torus = (\mathbb Z / N \mathbb Z)^d$, $N\geq 1$, and $\lattice$ for $d\geq 3$ endowed with the usual graph structure. We let $d_\torus(\cdot,\cdot)$ resp.~$d_\lattice(\cdot,\cdot)$ denote the graph distance on $\torus$ resp.~on $\lattice$ and write $x\sim_\torus \! y$ resp.~$x\sim_\lattice \! y$ for neighbouring vertices in $\torus$ resp.~in $\lattice$. Moreover, for $x\in \lattice$ we let $|x |$ stand for the Euclidean norm on $\lattice$. For any set $U\subseteq \torus$ resp.~$U \subseteq \lattice$ we denote the (outer) boundary of $U$ in $\torus$ resp.~in $\lattice$ by $\partial_\torus U \coloneqq \{ y \in \torus \setminus U \, | \, y \sim_\torus \!\! x  \text{ for some } x\in U \}$ resp.~$\partial_\lattice U \! \coloneqq \! \{ y \in \lattice \setminus U \, | \,  y \sim_\lattice x \text{ for some } x\in U \}$. Furthermore, $|U|$ stands for the cardinality of $U$. We let $\pi_N: \lattice \to \torus$ denote the canonical projection. If $x\in \torus$, we write $\widehat x \in \lattice$ for the unique element of $\pi_N^{-1}(\{ x \}) \cap (-\frac{N}2,\frac{N}2]^d$. If $U \subseteq \torus$, we similarly write $\widehat U \coloneqq \{ \widehat x \in \lattice \, | \, x \in U \} \subseteq \lattice$. Note that the map $x \mapsto \widehat x$  from  $\torus$ to $(-\frac{N}2,\frac{N}2]^d \cap \lattice$ is a bijection with inverse $\pi_N |_{(-\frac{N}2,\frac{N}2]^d \cap \lattice}$. 

We now introduce some notation concerning simple random walk on $\torus$ and $\lattice$. We write $P_x^\torus$ resp.~$P_x^\lattice$ for the canonical law of the simple random walk on $\torus$ resp.~on $\lattice$ starting at $x$ as well as $E_x^\torus$ resp.~$E_x^\lattice$ for the corresponding expectation. The canonical process for both discrete-time walks is denoted by $(X_k)_{k\geq 0}$. For the continuous-time walks with i.i.d. holding times of distribution Exp(1) we write $(\overline{X}_t)_{t\geq 0}$. Given a vertex set $U\subseteq \torus$ resp.~$U\subseteq \lattice$, we write $T_U \coloneqq \inf \{ k\geq 0 \, | \, X_k \notin U\}$ for the exit time from $U$ and $H_U \coloneqq \inf \{ k\geq 0 \, | \, X_k \in U \}$ for the entrance time in $U$ of the discrete-time walk $(X_k)_{k\geq 0}$ on $\torus$ resp.~on $\lattice$. In the special case of $U=\{z \}$ we use $H_z$ in place of $H_{\{z\}}$.

\subsection{Simple random walk and Green functions on $\lattice$} \label{subsection1.1}

The Green function $g_\lattice(\cdot,\cdot)$ of simple random walk on $\lattice$ is
\begin{equation}  \label{1.1}
g_\lattice(x,y) \coloneqq E_x^\lattice \Big[\sum_{k=0}^\infty \mathds{1}_{\{X_k=y\}} \Big] = \sum_{k=0}^\infty P_x^\lattice[X_k=y] \quad \text{for } x,y \in \lattice.
\end{equation}
It is symmetric, positive, finite and satisfies $g_\lattice(x,y)=g_\lattice(x-y,0)$. Furthermore (see \cite{3}, Theorem 1.5.4),
\begin{equation}  \label{1.2}
g_\lattice(x,y) \sim c |x-y|^{2-d}, \quad \text{as } |x-y| \longrightarrow \infty. 
\end{equation}
For $U\subseteq \lattice$, the Green function $g_\lattice^U(\cdot,\cdot)$ of simple random walk on $\lattice$ killed when exiting $U$ is
\begin{equation} \label{1.3}
g_\lattice^U(x,y) \coloneqq E_x^\lattice \Big[\sum_{0\leq k < T_U} \mathds{1}_{\{X_k=y\}} \Big] = \sum_{k=0}^\infty P_x^\lattice[X_k=y, k<T_U] \quad \text{for } x,y \in \lattice.
\end{equation}
It is again symmetric, finite and vanishes whenever $x\notin U$ or $y\notin U$. The functions $g_\lattice(\cdot,\cdot)$ and $g_\lattice^U(\cdot,\cdot)$ are related by the identity (see \cite{2}, Proposition 4.6.2(a))
\begin{equation} \label{1.4}
g_\lattice(x,y) = g_\lattice^U(x,y) + E_x^\lattice \big[g_\lattice(X_{T_U},y) \mathds{1}_{\{T_U < \infty\}}\big] \quad \text{for } x,y \in \lattice,
\end{equation}
which follows from a simple application of the strong Markov property of $(X_k)_{k\geq0}$.

Next we show a basic property of simple random walk on $\lattice$, namely an easy estimate of the hitting distribution of a hyperplane. It is needed for the proof of the bounds in Proposition \ref{proposition2.1}. 

\begin{lemma} \label{extralemma1}
Consider the half-space $\mathbb H \coloneqq \{ (y_1,\ldots,y_d) \in \lattice \, | \, y_1\geq 1 \}$ of $\lattice$ with boundary $\partial \mathbb H \coloneqq \partial_\lattice \mathbb H = \{ (y_1,\ldots,y_d) \in \lattice \, | \, y_1=0 \}$. Then for all $x\in \mathbb H$ and $z \in \partial \mathbb H$ it holds that
\begin{equation}  \label{extraequation20}
P_x^\lattice[ X_{T_{\mathbb H}} = z] \leq c|x-z|^{1-d}. 
\end{equation} 
\end{lemma}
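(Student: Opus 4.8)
The plan is to control the hitting distribution of $\partial\mathbb H$ by comparing the discrete walk to a one-dimensional random walk in the first coordinate and using the known asymptotics for the Green function. First I would observe that under $P_x^\lattice$, the first coordinate $(X_k^{(1)})_{k\geq 0}$ is itself a lazy simple random walk on $\mathbb Z$ (it stays put with probability $1-1/d$ at each step, and otherwise moves $\pm 1$ with equal probability), independent in its jump structure of the other coordinates in the sense that the exit time $T_\mathbb H$ is determined by this first coordinate alone: $T_\mathbb H$ is the first time $X_k^{(1)}=0$. Writing $x=(x_1,x')$ and $z=(0,z')$, the strong Markov property at time $T_\mathbb H$ and translation invariance give the exact identity
\begin{equation} \label{extraproof1}
g_\lattice(x,z) = E_x^\lattice\big[ g_\lattice(X_{T_\mathbb H},z)\big] = \sum_{w' \in \mathbb Z^{d-1}} P_x^\lattice\big[ X_{T_\mathbb H} = (0,w')\big]\, g_\lattice\big((0,w'),(0,z')\big),
\end{equation}
using \eqref{1.4} with $U=\mathbb H$ and $g_\lattice^\mathbb H(x,z)=0$ since $z\notin\mathbb H$.

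From \eqref{extraproof1}, keeping only the single term $w'=z'$ on the right-hand side and using $g_\lattice((0,z'),(0,z'))=g_\lattice(0,0)=c>0$, I get the clean bound
\begin{equation} \label{extraproof2}
P_x^\lattice\big[ X_{T_\mathbb H} = z\big] \leq \frac{g_\lattice(x,z)}{g_\lattice(0,0)} \leq c\, g_\lattice(x,z).
\end{equation}
Now invoke \eqref{1.2}: there is $c'$ so that $g_\lattice(x,z)\leq c' |x-z|^{2-d}$ for all $x\neq z$ (the asymptotic statement \eqref{1.2}, together with finiteness of $g_\lattice$ and boundedness away from the diagonal, upgrades to such a global bound). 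This already yields $P_x^\lattice[X_{T_\mathbb H}=z]\leq c|x-z|^{2-d}$, which is \emph{weaker} than the claimed $c|x-z|^{1-d}$ by one power; so the crude one-term bound is not enough, and this is where the real work lies.

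To gain the extra power of $|x-z|$, the idea is to exploit that the walk, to hit the specific point $z$ on $\partial\mathbb H$, must both reach the hyperplane and land at the correct lateral position. I would split according to the value $h:=x_1\geq 1$ of the first coordinate. By the reflection/gambler's-ruin estimate for the lazy walk on $\mathbb Z$, the probability that $X^{(1)}$ ever hits $0$ starting from $h$ before the first coordinate has wandered is of order $1$, but more usefully the number of steps before absorption is typically of order $h^2$, and after $\asymp h^2$ steps the lateral displacement $X'$ has spread over a region of diameter $\asymp h$ in $\mathbb Z^{d-1}$. Concretely, one conditions on $T_\mathbb H$: for any $k$, given $T_\mathbb H=k$, the lateral coordinate $X_{T_\mathbb H}'$ has the law of the $(d-1)$-dimensional walk component run for the number of lateral steps among the first $k$ steps, whose local probabilities are bounded by $c\,(1+k)^{-(d-1)/2}$ by the local CLT. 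Hence
\begin{equation} \label{extraproof3}
P_x^\lattice\big[ X_{T_\mathbb H} = z\big] \leq \sum_{k\geq 1} P_x^\lattice[T_\mathbb H = k]\cdot c\,(1+k)^{-(d-1)/2},
\end{equation}
and then I bound the right-hand side by splitting the sum at $k\asymp |x-z|^2$: for $k\gtrsim |x-z|^2$ the factor $(1+k)^{-(d-1)/2}$ is at most $c|x-z|^{-(d-1)}$ and the remaining probabilities sum to at most $1$; for $k\lesssim |x-z|^2$ one uses that hitting $z$ requires the lateral coordinate to travel distance $\asymp|z'-x'|$ in few steps, which is superpolynomially unlikely unless $|x-z|\asymp h$, in which case $\sum_{k\leq Ch^2} P_x^\lattice[T_\mathbb H=k](1+k)^{-(d-1)/2}$ is again controlled via the standard one-dimensional hitting-time tail $P_x^\lattice[T_\mathbb H=k]\leq c\,h\,k^{-3/2}$ for $k\geq h^2$ and an elementary estimate for $k< h^2$. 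Carrying out this dyadic bookkeeping gives $P_x^\lattice[X_{T_\mathbb H}=z]\leq c|x-z|^{1-d}$, which is \eqref{extraequation20}.

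The main obstacle is precisely this last step: the one-term Green-function bound \eqref{extraproof2} loses a factor $|x-z|$, so one genuinely needs the two-scale argument combining the one-dimensional hitting-time tail of the (lazy) walk on $\mathbb Z$ with the $(d-1)$-dimensional local central limit theorem for the lateral displacement. An alternative, and perhaps cleaner, route to the same estimate is to use a known sharp estimate for the Poisson kernel of a half-space for simple random walk on $\lattice$ — namely $P_x^\lattice[X_{T_\mathbb H}=z]\asymp x_1\,|x-z|^{-d}$, see e.g. the discrete harmonic-measure estimates in \cite{2} — and simply note $x_1\leq |x-z|$; if the paper is willing to cite such an estimate this shortens the proof considerably, but the self-contained local-CLT argument above is the fallback I would write out.
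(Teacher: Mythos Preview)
Your probabilistic route is different from the paper's, which uses the method of images: one writes $P_x^\lattice[X_{T_\mathbb H}=z]=\frac{1}{2d}\,g_\lattice^{\mathbb H}(x,z')$ where $z'$ is the neighbour of $z$ in $\mathbb H$, then identifies $g_\lattice^{\mathbb H}(x,z') = g_\lattice(x,z')-g_\lattice(x,\overline{z'})$ with $\overline{z'}$ the reflection of $z'$ across $\partial\mathbb H$, and finally bounds this discrete gradient by $c|x-z|^{1-d}$ via the Green-function derivative estimate in \cite{3}, Theorem~1.5.5. This is essentially the ``cleaner alternative'' you allude to at the end, and it is a five-line argument.

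Your two-scale decomposition can in principle be made to work, but as written it has a real gap in the regime $|x-z|\asymp h \coloneqq x_1$. There the sum you must control is $\sum_{k\leq Ch^2} P_x^\lattice[T_\mathbb H=k]\,(1+k)^{-(d-1)/2}$, and the tail bound $P_x^\lattice[T_\mathbb H=k]\leq c\,h\,k^{-3/2}$ you quote yields only $c\,h\sum_{k\geq h} k^{-(d+2)/2}\asymp h^{1-d/2}$, missing the target $h^{1-d}$ by a factor $h^{(d-2)/2}$. What is missing is the Gaussian tail of the one-dimensional hitting time, namely $P_x^\lattice[T_\mathbb H=k]\leq c\,h\,k^{-3/2}e^{-c'h^2/k}$; this exponential is exactly what concentrates the sum near $k\asymp h^2$ and recovers the correct power. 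Your ``elementary estimate for $k<h^2$'' may intend this, but it is the heart of the computation and cannot be left unspecified. A secondary imprecision: your displayed inequality $P_x^\lattice[X_{T_\mathbb H}=z]\leq \sum_k P_x^\lattice[T_\mathbb H=k]\cdot c\,(1+k)^{-(d-1)/2}$ is not pointwise valid, since conditioning on $T_\mathbb H=k$ with $k$ near $h$ forces almost all steps to be first-coordinate steps, so the number of lateral steps can be $o(k)$ and the lateral local-CLT factor is unjustified there; this too is repairable (such $k$ carry exponentially small $P_x^\lattice[T_\mathbb H=k]$), but it must be argued.
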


\begin{proof}
The proof uses the method of images. Fix $x\in \mathbb H$ and $z \in \partial \mathbb H$. If $z'$ denotes the unique vertex in $\mathbb H$ such that $z' \sim_\lattice z$, then by \cite{2}, Lemma 6.3.6, one has (note that  $P_x^\lattice$-almost surely $T_{\mathbb H} < \infty$)
\begin{equation}  \label{extraequation15}
P_x^\lattice [ X_{T_{\mathbb H}} = z ]  = \frac{1}{2d} g_\lattice^{\mathbb H}(x,z')  
\end{equation}
(where we use the notation \eqref{1.3}).
By \eqref{1.4} we have 
\begin{equation} \label{extraequation16}
g_\lattice^{\mathbb H}(x,z') = g_\lattice(x,z') - E_x^\lattice \big[g_\lattice(X_{T_{\mathbb H}},z') \big].
\end{equation} 
If we let $\overline{x} \in \lattice$ be the vertex obtained by reflection of $x=(x_1,\ldots,x_d)$ at $\partial \mathbb H$ (i.e.~$\overline{x}=(-x_1,x_2,\ldots,x_d) \notin \mathbb H$), then we obtain, by symmetry and the strong Markov property,
\begin{equation}   \label{extraequation17}
\begin{split} 
E_x^\lattice \big[&g_\lattice(X_{T_{\mathbb H}},z')  \big] = E_{\overline x}^\lattice \big[g_\lattice(X_{H_{\partial \mathbb H}},z') \big] = E_{\overline x}^\lattice \Big[\sum_{k  \geq H_{\partial \mathbb H}} \!\!\! \mathds{1}_{\{X_k=z'\}} \Big] \overset{\overline{x}\notin \mathbb H,z' \in \mathbb H }{=}  g_\lattice(\overline x,z').
\end{split}
\end{equation}
Again by symmetry, the expression on the right hand side of \eqref{extraequation17} equals  $g_\lattice(x,\overline z')$ with $\overline z'$ denoting the vertex obtained by reflection of $z'$ at $\partial \mathbb H$. Thus, combining \eqref{extraequation15}, \eqref{extraequation16} and \eqref{extraequation17} we deduce
\begin{equation} \label{extraequation18}
P_x^\lattice [ X_{T_{\mathbb H}} = z]  = \frac1{2d} ( g_\lattice(x,z') - g_\lattice(x,\overline z') ). 
\end{equation}
On the right hand side of \eqref{extraequation18} we have a discrete gradient of the Green function $g_\lattice(\cdot,\cdot)$, which can be bounded by \cite{3}, Theorem 1.5.5, by
\begin{equation} \label{extraequation19}
\begin{split}
g_\lattice(x,z') - g_\lattice(x,\overline z')  \leq c|x-z|^{1-d} 
\end{split}
\end{equation}
(note that $|z'-\overline z'|=2$). The statement \eqref{extraequation20} follows from \eqref{extraequation18} and \eqref{extraequation19}. 
\end{proof}

\subsection{Simple random walk and Green functions on $\torus$} \label{subsection1.2}

The zero-average Green function $G_\torus(\cdot,\cdot)$ associated with simple random walk on $\torus$ is given by
\begin{equation} \label{1.5}
G_\torus(x,y) \coloneqq  \int_0^\infty \Big( P_x^\torus[\overline{X}_t = y] - \frac1{N^d} \Big) \, dt \quad \text{for } x,y \in \torus.
\end{equation}
It is symmetric, satisfies $G_\torus(x,y) = G_\torus(x-y,0)$, is finite and positive-semidefinite, i.e.~for any $f: \torus \to \mathbb R$ one has $\sum_{x,y \in \torus} f(x) G_\torus(x,y) f(y) \geq 0$. 
We explain in the next remark how  these last two properties can be derived. 
\begin{remark}  
If $Q=(Q_{xy})_{x,y \in \torus}$ denotes the $N^d\times N^d$ transition rate matrix (generator matrix) of the continuous-time simple random walk $(\overline{X}_t)_{t\geq 0}$ on $\torus$,  that is, 
\begin{equation*}
Q_{xx} \coloneqq -1 \text{ for } x \in \torus \quad \text{and} \quad  Q_{xy} \coloneqq P_x^\torus[X_1 = y] \text{ for } x,y \in \torus, x\neq y,
\end{equation*}
then by the spectral representation (see \cite{1}, equation (3.33) in Chapter 3, p.~73) one has\begin{equation} \label{100}
P_x^\torus[\overline{X}_t = y] =  \sum_{k=1}^{N^d} u^k(x)  \exp(-\mu_k t) u^k(y),
\end{equation} 
where $\mu_1,\ldots,\mu_{N^d} \in \mathbb R$ are the eigenvalues of $-Q$ and $u^1,\ldots,u^{N_d} \in \mathbb R^{\torus}$ are  corresponding orthonormal eigenvectors.
The eigenvalues of $-Q$ are given in  \cite{1}, Example 5.17 in Chapter 5, p.~190. In particular, we can order them in such a way that 
\begin{equation} \label{101}
0=\mu_1 < \mu_2 \leq \ldots \leq \mu_{N^d} \quad  \text{and} \quad  u^1=\frac1{\sqrt{N^d}} (1,\ldots,1)^T 
\end{equation}
without loss of generality since the eigenvectors are normalised. The above formula \eqref{100} implies the finiteness of $G_\torus(x,y)$ because
\begin{equation} \label{102}
\begin{split}
G_\torus(x,y) &\overset{\eqref{100}}{=}  \int_0^\infty \Big( u^1(x) \exp(-\mu_1 t) u^1(y) + \sum_{k=2}^{N^d}  u^k(x)  \exp(-\mu_k t) u^k(y) - \frac1{N^d} \Big) \, dt \\
&\overset{\eqref{101}}{=} \sum_{k=2}^{N^d} u^k(x)u^k(y) \int_0^\infty \exp(-\mu_k t) \, dt  =  \sum_{k=2}^{N^d} u^k(x)u^k(y)  \frac1{\mu_k} 
\end{split}
\end{equation}
(see also  \cite{1}, equation (3.41) in Chapter 3, p.~74) and hence $\big|G_\torus(x,y) \big| < \infty$ for all $x,y\in \torus$ by \eqref{101}. Moreover, it also implies that for any $f: \torus \to \mathbb R$ one has
\begin{equation*}
\sum_{x,y \in \torus} f(x) G_\torus(x,y) f(y) \overset{\eqref{102}}{=}     \sum_{k=2}^{N^d} \frac1{\mu_k}  \Big( \sum_{z \in \torus} f(z) u^k(z) \Big)^2 \; \overset{\eqref{101}}{\geq} 0, 
\end{equation*}
showing the positive-semidefiniteness of $G_\torus(\cdot,\cdot)$. \qed
\end{remark}

\begin{remark} \label{extraremark23}
Consider $x,y \in \torus$ and define \textup{o}$\ \coloneqq \pi_N(0) \in \torus$ for all $N\geq 1$.  As we now explain one has $\big| G_\torus(x,y) \big|  \leq  G_\torus(\textup{o},\textup{o}) \xrightarrow{N \to \infty} g_\lattice(0,0)$.
Indeed for the inequality, the semi-definiteness of $G_\torus(\cdot,\cdot)$  applied to $f: \torus \to \mathbb R$ with $f(x) = G_\torus(y,y)^\frac12$, $f(y) = -G_\torus(x,y) / G_\torus(y,y)^\frac12$ and $f(z)=0$ for all $z \neq x,y$ implies $G_\torus(x,x)G_\torus(y,y)-G_\torus(x,y)^2 \geq 0$, that is $\big| G_\torus(x,y) \big| \leq G_\torus(x,x)^{\frac12}  G_\torus(y,y)^{\frac12} \overset{\eqref{1.5}}{=} G_\torus(\textup{o},\textup{o})$.

For the convergence of $G_\torus(\textup{o},\textup{o})$ to $g_\lattice(0,0)$ as $N$ goes to infinity, note that by  \cite{1}, Proposition 13.8 in Chapter 13, p.~428, one has $\lim_{N\to \infty}  G_\torus(\textup{o},\textup{o}) = R^d$ (combine therein (13.37) with the first equality of (13.39)), where  $R^d$ satisfies that $R^d = \lim_{m \to \infty}  E_0^\lattice \Big[\sum_{k=0}^m \mathds{1}_{\{X_k=0\}} \Big] \overset{\eqref{1.1}}{=} g_\lattice(0,0)$ by (13.31) of the same reference.
\qed  
\end{remark}
For later use we define for $U\subsetneq \torus$ the Green function of simple random walk on $\torus$ killed when exiting $U$, which is
\begin{equation} \label{1.6}
g_\torus^U(x,y) \coloneqq E_x^\torus \Big[\sum_{0\leq k < T_U} \mathds{1}_{\{X_k=y\}} \Big] = \sum_{k=0}^\infty P_x^\torus[X_k=y, k<T_U] \quad \text{for } x,y \in \torus
\end{equation}
(equally, $g_\torus^U(\cdot,\cdot)$ could be defined through the continuous-time simple random walk).
As $g_\lattice^U(\cdot,\cdot)$, it is symmetric, finite and vanishes for $x\notin U$ or $y\notin U$.
The functions $G_\torus(\cdot,\cdot)$ and $g_\torus^U(\cdot,\cdot)$ are related by a similar expression as the identity \eqref{1.4} for the Green functions on $\lattice$, namely
\begin{lemma}  
Assume $U \subsetneq \torus$. Then it holds that
\begin{equation}  \label{1.7}
G_\torus(x,y) = g_\torus^U(x,y) + E_x^\torus \big[G_\torus(X_{T_U},y) \big] -\frac{1}{N^d} E_x^\torus [T_U]  \quad \text{for all } x,y \in \torus.
\end{equation}
\end{lemma}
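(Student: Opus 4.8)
The plan is to decompose the continuous-time walk $(\overline{X}_t)_{t\geq 0}$ at its exit time from $U$ and insert the decomposition into the integral representation \eqref{1.5} of $G_\torus(\cdot,\cdot)$. Denote by $\overline{T}_U \coloneqq \inf\{ t\geq 0 \mid \overline{X}_t \notin U \}$ the continuous-time exit time from $U$. Since $\torus$ is finite and connected and $U \subsetneq \torus$, one has $\overline{T}_U < \infty$ $P_x^\torus$-a.s.; moreover $\overline{T}_U$ is the sum of the first $T_U$ holding times, which are i.i.d.\ $\mathrm{Exp}(1)$ and independent of the discrete skeleton $(X_k)_{k\geq0}$, so Wald's identity gives $E_x^\torus[\overline{T}_U] = E_x^\torus[T_U] < \infty$, and also $\overline{X}_{\overline{T}_U} = X_{T_U}$. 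Writing $p_t(z,y)\coloneqq P_z^\torus[\overline{X}_t = y]$ and using $1 = P_x^\torus[t<\overline{T}_U] + P_x^\torus[\overline{T}_U \leq t]$ together with the strong Markov property of $(\overline{X}_t)_{t\geq0}$ at $\overline{T}_U$, I would write, for each $t\geq 0$,
\[
p_t(x,y) - \tfrac1{N^d} = \Big( P_x^\torus[\overline{X}_t=y,\, t<\overline{T}_U] - \tfrac1{N^d} P_x^\torus[t<\overline{T}_U] \Big) + E_x^\torus\Big[ \mathds{1}_{\{\overline{T}_U \leq t\}} \big( p_{t-\overline{T}_U}(X_{T_U},y) - \tfrac1{N^d} \big) \Big].
\]

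Next I would integrate this identity over $t \in [0,\infty)$. For the first bracket, Tonelli's theorem and the Poisson representation of the continuous-time walk (the number of jumps up to time $t$ being Poisson$(t)$ and independent of the skeleton) give $\int_0^\infty P_x^\torus[\overline{X}_t=y,\, t<\overline{T}_U]\,dt = \sum_{k\geq 0} P_x^\torus[X_k=y,\,k<T_U] = g_\torus^U(x,y)$, finite by \eqref{1.6}, while $\int_0^\infty P_x^\torus[t<\overline{T}_U]\,dt = E_x^\torus[\overline{T}_U] = E_x^\torus[T_U]$; both being finite, the first bracket contributes $g_\torus^U(x,y) - \tfrac1{N^d} E_x^\torus[T_U]$. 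For the second bracket I would exchange $\int_0^\infty dt$ with $E_x^\torus$; after the change of variables $s = t - \overline{T}_U$ the inner integral becomes $\int_0^\infty \big( p_s(X_{T_U},y) - \tfrac1{N^d}\big)\, ds$, which is exactly $G_\torus(X_{T_U},y)$ by the definition \eqref{1.5}, so the second bracket contributes $E_x^\torus[G_\torus(X_{T_U},y)]$. Adding the two contributions yields \eqref{1.7}. (When $x\notin U$ all terms degenerate consistently, since then $T_U=\overline{T}_U=0$ and $g_\torus^U(x,\cdot)\equiv 0$.) A discrete-time variant, decomposing $(X_k)_k$ at the strong Markov time $T_U$, also works but must deal with the periodicity of the walk when $N$ is even, which is precisely what the continuous-time/zero-average formulation circumvents.

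The only point requiring genuine care — and the main, if mild, obstacle — is the justification of the Fubini exchange in the second bracket, i.e.\ the integrability of $(t,\omega)\mapsto \mathds{1}_{\{\overline{T}_U\leq t\}}\big(p_{t-\overline{T}_U}(X_{T_U},y)-\tfrac1{N^d}\big)$ with respect to the product of Lebesgue measure and $P_x^\torus$. This is where the spectral gap enters: by \eqref{100}, \eqref{101} and \eqref{102} one has $p_t(z,y) - \tfrac1{N^d} = \sum_{k=2}^{N^d} u^k(z)\, e^{-\mu_k t}\, u^k(y)$, hence $|p_t(z,y)-\tfrac1{N^d}| \leq C e^{-\mu_2 t}$ with $\mu_2 > 0$ and a constant $C$ depending only on $N$ and $d$ (the state space being finite). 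Integrating this bound in $t$ first gives $\int_0^\infty E_x^\torus\big[\mathds{1}_{\{\overline{T}_U\leq t\}}\,|p_{t-\overline{T}_U}(X_{T_U},y)-\tfrac1{N^d}|\big]\, dt \leq C/\mu_2 < \infty$, which both legitimises the exchange and shows that the integral defining $G_\torus$ in \eqref{1.5} indeed splits as the sum of the two absolutely convergent pieces. Everything else — the strong Markov step, the identification $\overline{X}_{\overline{T}_U}=X_{T_U}$, and Wald's identity — is routine.
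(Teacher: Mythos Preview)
Your proof is correct and complete. The decomposition of $p_t(x,y)-\tfrac{1}{N^d}$ at the stopping time $\overline{T}_U$ is valid, the identification of the first piece with $g_\torus^U(x,y)-\tfrac{1}{N^d}E_x^\torus[T_U]$ via the Poisson--skeleton representation and Wald's identity is fine, and the Fubini justification for the second piece using the uniform spectral bound $|p_t(z,y)-\tfrac{1}{N^d}|\leq C e^{-\mu_2 t}$ is exactly what is needed.

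Your route is genuinely different from the paper's. The paper does not work directly with the integral \eqref{1.5}; instead it first establishes the special case $U=\torus\setminus\{z\}$ by quoting two hitting-time identities from Aldous--Fill (expressing $g_\torus^{\torus\setminus\{z\}}(x,y)$ and $G_\torus$ in terms of expected hitting times), and then bootstraps to general $U$ by a discrete-time strong Markov decomposition of $g_\torus^{\torus\setminus\{z\}}$ at $T_U$. Your argument is more self-contained: it uses only the definition \eqref{1.5} and the spectral representation already recorded in the paper, and treats all $U$ at once without the single-point detour. The paper's approach, on the other hand, avoids any analytic integrability discussion by staying in discrete time and leaning on cited identities. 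Both are short; yours is arguably the more natural proof of a Markov-property-type identity and makes transparent where the $-\tfrac{1}{N^d}E_x^\torus[T_U]$ correction comes from.
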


\begin{proof}
We first prove the statement for the case  $U \coloneqq \torus \setminus \{z\}$ with $z \in \torus$ fixed. Note that in this case $T_U = H_z$ and hence $g_\torus^U(\cdot,\cdot)$ is the Green function of simple random walk on $\torus$ killed when hitting $z$. If we assume $x\neq z$, then \cite{1}, Lemma 2.9 in Chapter 2, p.~29, considering Subsection 2.2.3, p.~34, reads as 
\begin{equation}  \label{extraequation1}
g_\torus^U(x,y) = \frac1{N^d} \Big(E_x^\torus[H_z] + E_z^\torus[H_y] - E_x^\torus[H_y] \Big). 
\end{equation} 
Now apply \cite{1}, Lemma 2.12 in Chapter 2, p.~29, considering Subsection 2.2.3, p.~34, to the second and third expectation in \eqref{extraequation1} (observe that $Z_{xy}$ in the notation of \cite{1} corresponds to $G_\torus(x,y)$) and obtain (note that it also trivially holds for $x=z$) 
\begin{equation}  \label{extraequation2}
g_\torus^{\torus \setminus \{z\}}(x,y) = \frac1{N^d} E_x^\torus[H_z] - G_\torus(z,y) + G_\torus(x,y) \quad \text{for all } x,y \in \torus.
\end{equation}
Rearranging \eqref{extraequation2} leads to \eqref{1.7} for the special case $U=\torus \setminus \{z \}$ since $P_x^\torus$-almost surely $X_{T_U} = z$. Now assume that $U \subsetneq \torus$ is arbitrary. Choose $z \in \torus\setminus U$ and assume $x\in U$. Then, $P_x^\torus$-almost surely $T_U \leq H_z<\infty$. We compute in the same way as when proving $\eqref{1.4}$  by the strong Markov property 
\begin{equation} \label{extraequation3}
\begin{split}
g_\torus^{\torus \setminus \{z\}} (x,y) &\overset{\eqref{1.6}}{=} E_x^\torus \Big[\sum_{0\leq k < T_U} \mathds{1}_{\{X_k=y\}} \Big]  + E_x^\torus \Big[\sum_{T_U\leq k < H_z} \mathds{1}_{\{X_k=y\}} \Big] \\
&\overset{\phantom{\eqref{1.6}}}{=} g_\torus^U (x,y) + E_x^\torus \big [g_\torus^{\torus \setminus \{z\}}(X_{T_U},y) \big].
\end{split}
\end{equation}
We can use \eqref{extraequation2} inside the expectation on the right-hand side of  \eqref{extraequation3} and obtain for all $x\in U$ and $y\in \torus$
\begin{equation} \label{extraequation4}
\begin{split}
&g_\torus^{\torus \setminus \{z\}} (x,y)  = g_\torus^U(x,y) +  E_x^\torus \Big[ \frac1{N^d} E_{X_{T_U}}^\torus[H_z] -G_\torus(z,y)+ G_\torus(X_{T_U},y)  \Big] \\
&\qquad \qquad \quad \, = g_\torus^U(x,y) + \frac1{N^d}E^\torus_x[H_z - T_U] - G_\torus(z,y) + E_x^\torus \big[  G_\torus(X_{T_U},y)  \big] 
\end{split}
\end{equation}
again by the strong Markov property. By comparing the right hand sides of \eqref{extraequation4} and \eqref{extraequation2} we deduce \eqref{1.7} for all $x\in U$ and $y\in \torus$. Since \eqref{1.7} trivially holds if $x\notin U$, the proof of \eqref{1.7} is complete.
\end{proof}

We conclude this subsection proving an upper bound of $G_\torus(x,y)$ for $x,y \in \torus$ `far apart', a key ingredient for later (see Proposition \ref{proposition2.1}). Recall that for example on $\lattice$ one has \eqref{1.2}.

\begin{proposition} \label{proposition1.1}
For all $N\geq 1$ and $x,y \in \torus$ with  $x\neq y$ it holds that
\begin{equation} \label{1.8}
\big|G_\torus(x,y) \big| \leq c(\ln(N))^{\frac{3d}{2}} d_\torus(x,y)^{2-d}.
\end{equation}
\end{proposition}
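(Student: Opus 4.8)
The plan is to compare the torus Green function with the lattice Green function on a suitable subset of the torus, where the two are close, and to control the discrepancy using the identity \eqref{1.7} together with the long-range decay \eqref{1.2} of $g_\lattice$. More precisely, fix $x,y \in \torus$ with $x \neq y$ and set $r \coloneqq d_\torus(x,y)$. I would choose $U \subsetneq \torus$ to be a box (in the torus) of side length comparable to $r$, centered so that it contains both $x$ and $y$ but stays away from the "far side" of the torus; lifting $U$ via the bijection $z \mapsto \widehat z$ gives an isomorphic box $\widehat U \subseteq \lattice$, so that $g_\torus^U(x,y) = g_\lattice^{\widehat U}(\widehat x, \widehat y)$, and the latter is at most $g_\lattice(\widehat x,\widehat y) \leq c\,|\widehat x - \widehat y|^{2-d} = c\, r^{2-d}$ by \eqref{1.2} (using $|\widehat x - \widehat y| \geq c\, d_\torus(x,y)$ for an appropriate choice of representatives, or more simply $d_\lattice(\widehat x, \widehat y) = d_\torus(x,y)$ when $U$ is chosen well). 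This handles the main term in \eqref{1.7}.

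It then remains to bound the two remaining terms on the right-hand side of \eqref{1.7}, namely $E_x^\torus[G_\torus(X_{T_U},y)]$ and $\tfrac1{N^d} E_x^\torus[T_U]$. For the latter, since $U$ is a box of side length of order $r \leq c\,N$, a standard exit-time estimate gives $E_x^\torus[T_U] \leq c\, r^2$, so this term is at most $c\, r^2 / N^d \leq c\, r^{2-d}$ because $r \leq c N$ forces $r^2/N^d \leq c\, r^{2-d}$ once $d \geq 2$; in fact this term is comfortably within the claimed bound. For the expectation term, I would use $|G_\torus(\cdot,\cdot)| \leq G_\torus(\textup{o},\textup{o}) \leq c$ (uniformly in $N$, by Remark \ref{extraremark23}) to bound $|E_x^\torus[G_\torus(X_{T_U},y)]| \leq c$; but a crude bound of $c$ is not enough since we need $c\,r^{2-d}$ with the polylog factor. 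The point is that $X_{T_U} \in \partial_\torus U$ lies at torus-distance of order $r$ from $y$, so one wants to say $|G_\torus(X_{T_U}, y)| \leq c\, r^{2-d}$ — but that is precisely the estimate we are trying to prove, so this cannot be used directly.

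This circularity is the main obstacle, and the natural way around it is an iteration / bootstrap argument, which I expect is the source of the $(\ln N)^{3d/2}$ loss. Concretely: let $\bar G(r) \coloneqq \sup\{ |G_\torus(x,y)| : d_\torus(x,y) \geq r \}$. The identity \eqref{1.7} with $U$ a box of side length $\sim r/2$ around $x$ then yields, schematically, $\bar G(r) \leq c\, r^{2-d} + \sup_{z \in \partial_\torus U} |G_\torus(z,y)|$, and since every $z \in \partial_\torus U$ still satisfies $d_\torus(z,y) \geq r/2 - 1$, one gets a relation of the form $\bar G(r) \leq c\, r^{2-d} + \bar G(r/2 - 1)$ — which is useless as an inequality (it goes the wrong way). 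Instead one should exploit that $X_{T_U}$ is typically much farther from $y$ than $r$ only with small probability, i.e. split the expectation $E_x^\torus[G_\torus(X_{T_U},y)]$ according to the value of $d_\torus(X_{T_U},y)$ and use the hitting-distribution control (the half-space estimate of Lemma \ref{extralemma1}, transferred to the torus box) to show that the walk exits $U$ through its side facing $y$ with probability at most $c\, r^{1-d}$ times the surface area, giving an extra gain. Iterating this over $O(\ln N)$ dyadic scales $r, 2r, 4r, \dots$ up to scale $N$, with a bounded multiplicative loss (a power of $d$, plus the $\sqrt{\ln N}$-type factors coming from the number of boundary points and the exit-time tails at each scale) per step, accumulates to the stated $(\ln N)^{3d/2}$ prefactor while preserving the $d_\torus(x,y)^{2-d}$ power. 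I would set up the induction on the scale, carry out one step carefully using \eqref{1.7}, the transferred version of Lemma \ref{extralemma1}, the bound $E_x^\torus[T_U] \le c r^2$, and the uniform bound $G_\torus(\textup{o},\textup{o}) \le c$ to absorb the final (coarsest) scale, and then collect the losses.
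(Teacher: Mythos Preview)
Your approach via the identity \eqref{1.7} is fundamentally different from the paper's, which does not use \eqref{1.7} at all for this proposition. The paper works directly with the integral definition \eqref{1.5}: it splits the time integral at $t_\torus \sim N^2\ln N$ (roughly the mixing time), controls the tail $t>t_\torus$ via the spectral-gap bound $|P_x^\torus[\overline X_t=y]-N^{-d}|\le e^{-\lambda_\torus t}$, and for $t\le t_\torus$ unfolds the torus heat kernel to $\lattice$ through $P_x^\torus[X_k=y]=\sum_{v\in\lattice}P_{\widehat x}^\lattice[X_k=\widehat y+vN]$, bounding each copy by \eqref{1.2} and a large-deviation estimate. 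The factor $(\ln N)^{3d/2}$ arises concretely from counting the $\sim\big(N^{-1}\sqrt{t_\torus}\,\ln N\big)^d$ relevant lattice copies.

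Your scheme has a genuine gap at the point where you ``absorb the final (coarsest) scale'' with the bound $G_\torus(\textup{o},\textup{o})\le c$. If one runs the upward iteration you describe---taking $U$ a ball (or box) of radius $R$ around $y$ with $x\in U$, so that $g_\torus^U(x,y)\le c\,r^{2-d}$, $N^{-d}E_x^\torus[T_U]\le cR^2/N^d$, and $d_\torus(X_{T_U},y)\ge R$---one obtains
\[
\bar G(r)\;\le\; c\,r^{2-d}+cR^2 N^{-d}+\bar G(R),
\]
and iterating over dyadic $R=2r,4r,\dots$ up to order $N$ gives $\bar G(r)\le c\,r^{2-d}+cN^{2-d}+\bar G(cN)$. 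The first two terms are fine, but for the last you only have $\bar G(cN)\le G_\torus(\textup{o},\textup{o})\le c$, a constant independent of $N$, not $cN^{2-d}$. The iteration therefore yields merely $\bar G(r)\le c$, which is Remark~\ref{extraremark23}. Closing the loop would require an independent proof that $|G_\torus(x,y)|\le cN^{2-d}$ whenever $d_\torus(x,y)\asymp N$, and that top-scale estimate is essentially the heart of the proposition (it is exactly the regime where the torus and $\lattice$ differ). Your proposed refinement---splitting $E_x^\torus[G_\torus(X_{T_U},y)]$ according to $d_\torus(X_{T_U},y)$ and invoking Lemma~\ref{extralemma1}---only re-expresses that expectation in terms of $\bar G$ at various scales; it cannot manufacture the missing $N^{2-d}$ input at the coarsest scale. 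The vague claim that an $O(\ln N)$-step iteration with ``bounded multiplicative loss per step'' produces exactly $(\ln N)^{3d/2}$ is also unsupported: there are $O(\ln N)$ steps, but nothing in your outline generates a power~$3d/2$.
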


\begin{proof}
The strategy is  to split the integral appearing in the definition of $G_\torus(x,y)$  at the time the walk $(\overline{X}_t)_{t\geq0}$ reaches equilibrium (here the uniform distribution). The time to reach equilibrium is roughly the inverse of the spectral gap of $(X_k)_{k\geq 0}$ (cf.~\cite{4}, Theorem 20.6). We now formalise this procedure.

The spectral gap of  simple random walk $(X_k)_{k\geq 0}$ on $\torus$ is (combine in  \cite{4} Subsection 12.3.1 with Corollary 12.12)
\begin{equation} \label{1.10}
\lambda_\torus \coloneqq \frac1d (1-\cos(\frac{2\pi}{N})) \sim \frac{2\pi^2}{dN^2}, \quad  \text{as } N \longrightarrow \infty.
\end{equation}
We set 
\begin{equation} \label{1.11}
t_\torus \coloneqq \frac1{\lambda_\torus} \ln(N^d) \sim \frac{d^2}{2\pi^2} N^2 \ln(N), \quad \text{as } N \longrightarrow \infty.
\end{equation}
By \cite{4}, Theorem 20.6, we now have
\begin{equation} \label{1.12}
\int_{t_\torus}^\infty \Big| P_x^\torus[\overline{X}_t = y] - \frac1{N^d}  \Big| \, dt \leq \int_{t_\torus}^\infty  e^{-\lambda_\torus t} \, dt \overset{\eqref{1.11}}{=}\frac1{\lambda_\torus N^d} \overset{\eqref{1.10}}{\leq} cN^{2-d} \leq c d_\torus(x,y)^{2-d}.
\end{equation}
On the other hand, switching to the discrete walk $(X_k)_{k\geq 0}$ (with $N_t$, $t \geq 0$, the number of jumps of the continuous-time simple random walk up to time $t$), we have
\begin{equation} \label{1.13}
\begin{split}
&\int_0^{t_\torus} \Big| P_x^\torus[\overline{X}_t = y] - \frac1{N^d}  \Big| \, dt  \leq \int_0^{t_\torus} \sum_{k=0}^\infty P_x^\torus[N_t = k] P_x^\torus[X_k = y] \, dt + \frac{t_\torus}{N^d}  \\
&\leq \sum_{k=0}^{\lfloor e t_\torus \rfloor} P_x^\torus[X_k = y]  \underbrace{\int_0^\infty \frac{t^k}{k!}e^{-t} \, dt}_{=1} + \int_0^{t_\torus} P_x^\torus[N_t \geq e t_\torus] \, dt + \frac{t_\torus}{N^d}  \\
&\overset{(*)}{\leq}  \sum_{k=0}^{\lfloor e t_\torus \rfloor} P_x^\torus[X_k = y]  + t_\torus \exp(-t_\torus) + \frac{t_\torus}{N^d},
\end{split}
\end{equation}
where in $(*)$  we used the exponential Markov inequality.  Because of \eqref{1.11}, the second and third term on the right hand side of \eqref{1.13} are clearly bounded by $c \ln(N) N^{2-d} \leq c \ln(N) d_\torus(x,y)^{2-d}$. Therefore, it only remains to bound the sum appearing in the last line of \eqref{1.13}, which can be rewritten as 
\begin{equation} \label{1.14}
\begin{split}
\sum_{k=0}^{\lfloor e t_\torus \rfloor} P_x^\torus[X_k=y] = \sum_{k=0}^{\lfloor e t_\torus \rfloor} \sum_{v \in \lattice} P_{\widehat x}^\lattice[X_k = \widehat y+vN].
\end{split}
\end{equation}
The inner series on the right hand side of \eqref{1.14} is actually a finite sum because  $P_{\widehat x}^\lattice[X_k = \widehat y+vN]=0$ for  $v\in \lattice$ with $d_\lattice(\widehat x, \widehat y+vN) > \lfloor e t_\torus \rfloor$ since $k\leq \lfloor e t_\torus  \rfloor$. Additionally, for $v\in \lattice$ with $(\lfloor e t_\torus \rfloor \geq)$ $d_\lattice(\widehat x,\widehat y+vN) \geq \sqrt{e t_\torus} (\ln(N))$ we can apply \cite{2}, Proposition 2.1.2(b), and find
\begin{equation*}
P_{\widehat x}^\lattice[X_k =\widehat y+vN] \leq P_{\widehat x}^\lattice \Big[\max_{k=0,\ldots,\lfloor e t_\torus  \rfloor}  d_\lattice(X_k, \hat x)   \geq \sqrt{e t_\torus} \ln(N) \Big] \leq c e^{-c' (\ln(N))^{2}}.
\end{equation*} 
There are at most $c(\frac1N t_\torus)^d$ different such $v\in \lattice$ because $\lfloor e t_\torus \rfloor \geq  d_\lattice(\widehat x,\widehat y+vN)$.
Finally, for the remaining $v\in \lattice$ with  $d_\lattice(\widehat x,\widehat y+vN) < \sqrt{e t_\torus} \ln(N)$ we can exchange the two sums and for each such $v$  get 
\begin{equation*} 
\begin{split}
\sum_{k=0}^{\lfloor e t_\torus \rfloor}  P_{\widehat x}^\lattice[X_k = \widehat y+vN] \overset{\eqref{1.1}}{\leq} g_\lattice(\widehat x,\widehat y+vN) \overset{\eqref{1.2}}{\leq} c d_\lattice(\widehat x,\widehat y+vN)^{2-d} \leq c d_\torus(x,y)^{2-d}.
\end{split}
\end{equation*}
There are at most $c(\frac1N \sqrt{t_\torus} \ln(N))^d$ different such $v\in \lattice$ because $d_\lattice(\widehat x,\widehat y+vN) < \sqrt{e t_\torus} \ln(N)$. All together,  by \eqref{1.14} we obtain
\begin{equation} \label{extraequation22} 
\sum_{k=0}^{\lfloor e t_\torus \rfloor}  P_x^\torus[X_k =y]  \leq c t_\torus \Big(\frac1N t_\torus \Big)^d e^{-c' (\ln(N))^{2}} + c''\Big(\frac1N \sqrt{t_\torus} \ln(N) \Big)^d d_\torus(x,y)^{2-d}.
\end{equation}
Because of \eqref{1.11}, the expression on the right hand side of \eqref{extraequation22} is bounded by the right hand side of \eqref{1.8} since the first term is bounded by $cN^{2-d} \leq c d_\torus(x,y)^{2-d}$ and the second term is bounded by $c (\ln(N))^{\frac{3d}{2}} d_\torus(x,y)^{2-d}$. Hence,
the combination of \eqref{1.12} and \eqref{1.13} concludes the proof.
\end{proof}

\subsection{The Gaussian free fields $\Psi_\torus$ and $\varphi_\lattice$} \label{subsection1.3}

We now turn to the Gaussian free fields $\Psi_\torus$ and $\varphi_\lattice$. Recall from \eqref{0.1} and \eqref{1.5} that $(\Psi_\torus(x))_{x \in \torus}$ is the centered Gaussian field with the zero-average Green function $G_\torus(\cdot,\cdot)$ as covariance, whereas recall from \eqref{0.2} and \eqref{1.1} that $(\varphi_\lattice(x))_{x \in \lattice}$ is the centered Gaussian field with the Green function $g_\lattice(\cdot,\cdot)$ of simple random walk on $\lattice$ as covariance. The next remark concerns the denotation `zero-average Gaussian free field' for $\Psi_\torus$ resp.~`zero-average Green function' for $G_\torus(\cdot,\cdot)$.  

\begin{remark}  \label{extraremark6}
Note that by bilinearity 
$$\textup{Var}_{\mathbb P_\torus} \Big(\sum_{x\in \torus} \Psi_\torus(x) \Big) = \sum_{x,y \in \torus} G_\torus(x,y) \overset{\eqref{1.5}}{=} 0,$$
which shows the zero-average property of $\Psi_\torus$ and $G_\torus(\cdot,\cdot)$. \qed
\end{remark} 

As we now explain, for any fixed subset $U \subsetneq \torus$ the zero-average Gaussian free field $\Psi_\torus$ can be related to the Gaussian free field on the torus vanishing outside $U$ (i.e.~with covariance $g_\torus^U(\cdot,\cdot))$. This will be of importance for the coupling of $\Psi_\torus$ and $\varphi_\lattice$ in Lemma \ref{lemma1.2}.

\begin{lemma} \label{extralemma2}
Consider $U \subsetneq \torus$. For $x \in \torus$ set 
\begin{equation}  \label{extraequation5}
\varphi_\torus^U(x) \coloneqq  \Psi_\torus(x) - E_x^\torus[ \Psi_\torus(X_{T_U}) ].
\end{equation} 
Then (recall \eqref{1.6}),
\begin{equation} \label{1.19}
\begin{split}
&\text{under $\mathbb P^\torus$, $(\varphi_\torus^U(x))_{x\in \torus}$ is a centered Gaussian field on $\torus$} \\
&\text{with covariance $\mathbb E^\torus [\varphi_\torus^U(x) \varphi_\torus^U(y)] = g_\torus^U(x,y)$ for all $x,y \in \torus$.}
\end{split}
\end{equation}
\end{lemma}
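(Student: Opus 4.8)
The plan is to view $\varphi_\torus^U$ as a deterministic linear image of $\Psi_\torus$, which immediately yields the Gaussian and centering assertions, and then to identify its covariance by expanding the defining product and repeatedly invoking the Green-function decomposition \eqref{1.7}.

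First I would record that, since $U \subsetneq \torus$ and $\torus$ is a finite connected graph, simple random walk leaves $U$ in finite time, so $T_U < \infty$ $P_x^\torus$-a.s.\ (indeed $T_U \le H_z$ for any fixed $z \in \torus \setminus U$) and moreover $E_x^\torus[T_U] < \infty$. Consequently, for each $x \in \torus$,
\[
E_x^\torus\big[\Psi_\torus(X_{T_U})\big] = \sum_{z \in \torus} P_x^\torus[X_{T_U}=z]\,\Psi_\torus(z)
\]
is a \emph{finite} linear combination of the coordinates of $\Psi_\torus$ (only $z \in \partial_\torus U$, or $z = x$ when $x \notin U$, contribute), so $(\varphi_\torus^U(x))_{x\in\torus}$ is a linear transformation of the centered Gaussian vector $(\Psi_\torus(x))_{x\in\torus}$ and is therefore itself a centered Gaussian field. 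In particular $\varphi_\torus^U(x) = \Psi_\torus(x) - \Psi_\torus(x) = 0$ for $x \notin U$, consistent with $g_\torus^U(\cdot,\cdot)$ vanishing off $U$.

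It then remains to identify the covariance. Writing $A_x \coloneqq E_x^\torus[\Psi_\torus(X_{T_U})]$, expanding $\varphi_\torus^U(x)\varphi_\torus^U(y) = \Psi_\torus(x)\Psi_\torus(y) - \Psi_\torus(x)A_y - A_x\Psi_\torus(y) + A_xA_y$, applying $\mathbb E^\torus$, and exchanging the finite walk-sums with $\mathbb E^\torus$, I obtain, using \eqref{0.1} and writing $E_x^\torus\otimes E_y^\torus$ for the expectation under two independent walks $X$, $\tilde X$ started at $x$, $y$ with exit times $T_U$, $\tilde T_U$,
\[
\mathbb E^\torus\big[\varphi_\torus^U(x)\varphi_\torus^U(y)\big] = G_\torus(x,y) - E_y^\torus\big[G_\torus(x,X_{T_U})\big] - E_x^\torus\big[G_\torus(X_{T_U},y)\big] + E_x^\torus\otimes E_y^\torus\big[G_\torus(X_{T_U},\tilde X_{\tilde T_U})\big].
\]
Into this I would substitute \eqref{1.7}, together with its version with $x$ and $y$ interchanged (using that $G_\torus$ and $g_\torus^U$ are symmetric): the two middle terms equal $G_\torus(x,y) - g_\torus^U(x,y) + \tfrac1{N^d}E_y^\torus[T_U]$ and $G_\torus(x,y) - g_\torus^U(x,y) + \tfrac1{N^d}E_x^\torus[T_U]$ respectively. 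For the double-walk term I would integrate out $\tilde X$ first by applying \eqref{1.7} at the random starting point $z = X_{T_U}$, which lies \emph{outside} $U$, so that $g_\torus^U(z,y) = 0$; this gives $E_y^\torus[G_\torus(z,\tilde X_{\tilde T_U})] = G_\torus(z,y) + \tfrac1{N^d}E_y^\torus[T_U]$, and then integrating over $X$ and using \eqref{1.7} once more turns the double-walk term into $G_\torus(x,y) - g_\torus^U(x,y) + \tfrac1{N^d}\big(E_x^\torus[T_U] + E_y^\torus[T_U]\big)$. Substituting the four resulting expressions back, the four copies of $G_\torus(x,y)$ cancel, the two $\tfrac1{N^d}E_x^\torus[T_U]$ terms cancel and likewise for $E_y^\torus[T_U]$, and a single $g_\torus^U(x,y)$ survives, which is precisely \eqref{1.19}.

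The one genuinely delicate point is the bookkeeping in the double-walk term: one has to apply \eqref{1.7} at the \emph{random} point $X_{T_U} \notin U$ (where the killed Green function vanishes) before integrating out the second walk, and then carry the two expected-exit-time corrections along so that they cancel against those produced by the middle terms. Everything else is linearity of Gaussian vectors and Fubini for finite sums.
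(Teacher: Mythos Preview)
Your proof is correct and follows essentially the same route as the paper: expand the covariance into four terms and simplify via repeated applications of \eqref{1.7}, observing that the $\tfrac{1}{N^d}E_\cdot^\torus[T_U]$ corrections and the extra $G_\torus$ terms all cancel. The only cosmetic difference is in the bookkeeping---the paper groups the first two terms together and matches the third term directly against the double-walk term (so only $E_x^\torus[T_U]$ appears, twice with opposite signs), whereas you treat the two middle terms symmetrically and carry both $E_x^\torus[T_U]$ and $E_y^\torus[T_U]$ through to the final cancellation.
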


\begin{proof}
The fact that $\varphi_\torus^U$ is centered is clear from its definition \eqref{extraequation5}. Now consider $x,y \in \torus$ and expand the covariance to obtain
\begin{equation} \label{extraequation6}
\begin{split}
\mathbb E^\torus [\varphi_\torus^U(x) \varphi_\torus^U(y)] &\overset{\eqref{extraequation5}}{=} G_\torus(x,y) - E_x^\torus[ G_\torus(X_{T_U},y)]  - E_y^\torus[ G_\torus(x,X_{T_U})]  \\ 
& \qquad \qquad  +  \mathbb E^\torus \Big[E_x^\torus[ \Psi_\torus(X_{T_U}) ]  E_y^\torus[ \Psi_\torus(X_{T_U})] \Big]. 
\end{split}
\end{equation} 
We introduce the product measure $\overline{P}_{x,y}^\torus \coloneqq P_x^\torus \times P_y^\torus$  so that $((X_k)_{k\geq0},(\widetilde{X}_k)_{k\geq0})$ under $\overline{P}_{x,y}^\torus$ has independent components distributed as a simple random walk on $\torus$ started at $x$ and a simple random walk on $\torus$ started at $y$. We let  $\overline{E}_{x,y}^\torus$ be the corresponding expectation.  Then, the last term on the right hand side of \eqref{extraequation6} equals
\begin{equation}  \label{extraequation7}
\mathbb E^\torus \Big[\overline{E}_{x,y}^\torus \big[ \Psi_\torus(X_{T_U})  \Psi_\torus(\widetilde X_{T_U}) \big] \Big]  \overset{\eqref{0.1}}{=} \overline{E}_{x,y}^\torus \big[ G_\torus(X_{T_U},\widetilde{X}_{T_U}) \big ].
\end{equation}
Additionally, the first two and the third term on the right hand side of \eqref{extraequation6} satisfy
\begin{equation} \label{extraequation8}
\begin{split}
G_\torus(x,y) - E_x^\torus[ G_\torus(X_{T_U},y)]  &\overset{\eqref{1.7}}{=} g_\torus^U(x,y) - \frac1{N^d} E_x^\torus [T_U]   \\ 
E_y^\torus[ G_\torus(x,X_{T_U})]  &\overset{\eqref{1.7}}{=} \overline{E}_{x,y}^\torus \big[ G_\torus(X_{T_U},\widetilde{X}_{T_U}) \big ] -  \frac{1}{N^d} E_x^\torus [T_U] . 
\end{split}
\end{equation} 
The combination of  \eqref{extraequation6}, \eqref{extraequation7} and \eqref{extraequation8} concludes the proof.
\end{proof}

\begin{remark} 
1) Fix any $z\in \torus$. If one applies the above Lemma \ref{extralemma2} for the choice $U \coloneqq  \torus \setminus \{ z\}$,  one obtains that $\big(\varphi_\torus^{\torus \setminus \{ z\}}(x)\big)_{x\in \torus} = (\Psi_\torus(x) - \Psi_\torus(z))_{x\in \torus}$ has the law of the Gaussian free field on $\torus$ pinned down at $z$ (i.e.~with covariance $g_\torus^{\torus \setminus \{ z\}}(\cdot,\cdot)$). Incidentally, the zero-average Gaussian free field $\Psi_\torus$ can be seen as the Gaussian free field on $\torus$ pinned down at $z$ and shifted by the random constant $\Psi_\torus(z)$. Since by Remark \ref{extraremark6} we have $\mathbb P^\torus$-almost surely the identity $\Psi_\torus(x) = \frac1{N^d} \sum_{z\in \torus}(\Psi_\torus(x) - \Psi_\torus(z))$, this implies that the field $\Psi_\torus$ can alternatively also be seen as an average of Gaussian free fields on $\torus$ pinned down at the various locations $z \in \torus$. 

\noindent 2) Recall that a property similar to Lemma \ref{extralemma2} holds for the Gaussian free field $\varphi_\lattice$ on $\lattice$. Indeed, for $U \subseteq \lattice$ and $(\varphi_\lattice^U(x))_{x\in \lattice}$ with
\begin{equation*}
\varphi_\lattice^U(x) \coloneqq  \varphi_\lattice(x) - E_x^\lattice[ \varphi_\lattice(X_{T_U}) \mathds{1}_{\{T_U < \infty\}} ] \quad \text{for } x\in \lattice,
\end{equation*}
one has that (recall \eqref{1.3}),
\begin{equation} \label{1.18}
\begin{split}
&\text{under $\mathbb P^\lattice$, $(\varphi_\lattice^U(x))_{x\in \lattice}$ is a centered Gaussian field on $\lattice$} \\
&\text{with covariance $\mathbb E^\lattice [\varphi_\lattice^U(x) \varphi_\lattice^U(y)] = g_\lattice^U(x,y)$ for all $x,y \in \lattice$.}
\end{split}
\end{equation}
This essentially follows from expanding the covariance and using \eqref{1.4} in a similar way as we used \eqref{1.7} in the proof of \eqref{1.19}. The relation \eqref{1.18} is part of the domain Markov property of $\varphi_\lattice$ (see e.g.~\cite{7}, Lemma 1.2).  \qed
\end{remark}

We conclude Section \ref{section1} by coupling $\Psi_\torus$ and $\varphi_\lattice$ in a straightforward way keeping \eqref{1.19} and \eqref{1.18} in mind.
First, we introduce a geometric condition on subsets of the torus central for this coupling and also needed  for later on. From the beginning of Section \ref{section1}, we remind the notation $\widehat x$ for the unique vertex in the pre-image of $x\in \torus$ under the projection $\pi_N:\lattice \to \torus$ lying in $(-\frac{N}2,\frac{N}2]^d$  and similarly $\widehat U \subseteq (-\frac{N}2,\frac{N}2]^d\cap \lattice$ for the set $\{ \widehat x \in \lattice \, | \, x \in U \}$ for $U \subseteq \torus$. Note that for $U \subseteq \torus$ we do not necessarily have $\partial_\lattice \widehat U = \widehat{\partial_\torus U}$ or equivalently that $\partial_\lattice \widehat U$ is contained in $(-\frac{N}2,\frac{N}2]^d\cap \lattice$.
We say that $U \subseteq \torus$ is \emph{properly contained} in $\torus$ if 
\begin{equation} \label{importantcondition}
\begin{split}
\text{$\partial_\lattice \widehat{U} \subseteq \textstyle (-\frac{N}2,\frac{N}2 ]^d \cap \lattice$.} 
\end{split}
\end{equation}
\begin{remark} \label{extraremark4}
Condition \eqref{importantcondition} on $U \subseteq \torus$ implies that $U \neq \torus$. More importantly, it ensures that for any $x\in \torus$ the image under $\pi_N$ of the law of the simple random walk on $\lattice$ started at $\widehat x \in \lattice$ and stopped when exiting $\widehat U$ is the same as the  law of  the simple random walk on $\torus$ started at $x$ and stopped when exiting $U$. In particular, the hitting distribution of the boundary $\partial_\torus U$ of the walk on $\torus$ is the image under $\pi_N$ of the hitting distribution of $\partial_\lattice \widehat{U}$ of the walk on $\lattice$. More precisely, it holds that
\begin{equation} \label{extraequation21}
P_x^\torus[X_{T_U} = y]   =  P_{\widehat x}^\lattice[X_{T_{\widehat  U}} = \widehat y] \quad \text{for all } x \in U \text{ and } y \in \partial_\torus U.
\end{equation} \qed
\end{remark}

\begin{lemma} \label{lemma1.2}
Assume $U \subseteq \torus$ is properly contained in $\torus$, that is, $U$ satisfies \eqref{importantcondition}. Then there exists a coupling of $\Psi_\torus$ and $\varphi_\lattice$ satisfying
\begin{equation} \label{1.20}
\Psi_\torus(x) - E_x^\torus[ \Psi_\torus(X_{T_{U}}) ]  = \varphi_\lattice(\widehat{x}) - E_{\widehat{x}}^\lattice[ \varphi_\lattice(X_{T_{\widehat U}}) ] \quad  \text{for all } x \in \torus.
\end{equation} 
\end{lemma}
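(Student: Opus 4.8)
The plan is to build the coupling by gluing together two Gaussian fields along their common ``boundary layer''. Concretely, by Lemma~\ref{extralemma2} applied to the set $U \subsetneq \torus$, the field $\varphi_\torus^U(x) = \Psi_\torus(x) - E_x^\torus[\Psi_\torus(X_{T_U})]$ is, under $\mathbb P^\torus$, a centered Gaussian field on $\torus$ with covariance $g_\torus^U(\cdot,\cdot)$, vanishing off $U$. Similarly, by part~2) of the Remark following Lemma~\ref{extralemma2} (equation~\eqref{1.18}), the field $\varphi_\lattice^{\widehat U}(x) = \varphi_\lattice(x) - E_x^\lattice[\varphi_\lattice(X_{T_{\widehat U}})\mathds{1}_{\{T_{\widehat U}<\infty\}}]$ is, under $\mathbb P^\lattice$, a centered Gaussian field on $\lattice$ with covariance $g_\lattice^{\widehat U}(\cdot,\cdot)$, vanishing off $\widehat U$. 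Since $U$ is properly contained in $\torus$ (condition~\eqref{importantcondition}), Remark~\ref{extraremark4} gives that the walk on $\torus$ killed on exiting $U$ projects exactly from the walk on $\lattice$ killed on exiting $\widehat U$; in particular the Green functions satisfy $g_\torus^U(x,y) = g_\lattice^{\widehat U}(\widehat x, \widehat y)$ for all $x,y\in \torus$ (both vanish unless $x,y\in U$, and inside $U$ the killed walks have matching transition probabilities by~\eqref{extraequation21} together with the fact that interior steps of the walk commute with $\pi_N$). Hence the projected field $(\varphi_\torus^U(x))_{x\in\torus}$ and the restricted-and-projected field $(\varphi_\lattice^{\widehat U}(\widehat x))_{x\in\torus}$ have the \emph{same law}.

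The construction then proceeds as follows. First I would realize $\varphi_\torus^U$ under $\mathbb P^\torus$ together with the boundary data $(\Psi_\torus(z))_{z\in\partial_\torus U}$; note that $\Psi_\torus(x) = \varphi_\torus^U(x) + \sum_{z\in\partial_\torus U} P_x^\torus[X_{T_U}=z]\,\Psi_\torus(z)$ recovers $\Psi_\torus$ on all of $\torus$ (using that $X_{T_U}\in\partial_\torus U$ a.s.\ and $\varphi_\torus^U$ vanishes off $U$, this identity holds also for $x\notin U$ where $\varphi_\torus^U(x)=0$ and $X_{T_U}=x$). Next, conditionally on the value of $\varphi_\torus^U$, I would sample $\varphi_\lattice$ from its conditional law given $\varphi_\lattice^{\widehat U} = (\varphi_\torus^U\circ\pi_N|_{\widehat U\cup\partial_\lattice\widehat U})$ — this makes sense precisely because $\varphi_\lattice^{\widehat U}$ is a deterministic (linear) functional of $\varphi_\lattice$, and because $\varphi_\torus^U$ pushed through $\pi_N^{-1}$ has the correct marginal law of $\varphi_\lattice^{\widehat U}$ by the previous paragraph. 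Under this joint law, $\varphi_\lattice^{\widehat U}(\widehat x) = \varphi_\torus^U(x)$ for all $x\in\torus$ by construction, and unrolling the definitions of the two fields gives exactly~\eqref{1.20}:
\begin{equation*}
\Psi_\torus(x) - E_x^\torus[\Psi_\torus(X_{T_U})] = \varphi_\torus^U(x) = \varphi_\lattice^{\widehat U}(\widehat x) = \varphi_\lattice(\widehat x) - E_{\widehat x}^\lattice[\varphi_\lattice(X_{T_{\widehat U}})].
\end{equation*}
Finally I would check that the two marginals are correct: that of $\Psi_\torus$ is so by the first sampling step, and that of $\varphi_\lattice$ is so because we drew it from its genuine conditional law given a variable with the right marginal.

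The main obstacle — really the only nontrivial point — is verifying the law identity $g_\torus^U(x,y) = g_\lattice^{\widehat U}(\widehat x,\widehat y)$, equivalently that $(\varphi_\torus^U(x))_{x\in\torus}$ and $(\varphi_\lattice^{\widehat U}(\widehat x))_{x\in\torus}$ are equal in law. This is where condition~\eqref{importantcondition} is essential: it guarantees $\partial_\lattice\widehat U \subseteq (-\tfrac N2,\tfrac N2]^d$, so that $\pi_N$ restricts to a bijection on $\widehat U \cup \partial_\lattice\widehat U$ and Remark~\ref{extraremark4}'s coupling of the killed walks applies verbatim; without it the boundary of $\widehat U$ could ``wrap around'' and the killed walk on $\lattice$ would fail to project to the killed walk on $\torus$. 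One also needs the mild observation that both killed Green functions vanish outside $U$ resp.\ $\widehat U$, so the identity extends trivially to all $x,y\in\torus$. Everything else is the standard two-step conditional coupling (sample the shared ``harmonic-complement'' field, then complete each side from its conditional law), and the claimed identity~\eqref{1.20} is immediate once the marginals are matched.
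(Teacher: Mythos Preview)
Your proposal is correct and takes essentially the same approach as the paper: both arguments reduce to the observation that the fields $\varphi_\torus^U(x)$ and $\varphi_\lattice^{\widehat U}(\widehat x)$ are centered Gaussians on $\torus$ with the same covariance $g_\torus^U(x,y)=g_\lattice^{\widehat U}(\widehat x,\widehat y)$ (using condition~\eqref{importantcondition} and Remark~\ref{extraremark4}), hence can be coupled to coincide. The paper's proof simply records this and stops, whereas you additionally spell out an explicit conditional-sampling construction of the full coupling of $\Psi_\torus$ and $\varphi_\lattice$; this extra detail is sound but not required, since equality in law of the two harmonic complements already yields the existence of the desired coupling.
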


\begin{proof}
By \eqref{1.19} the left hand side of \eqref{1.20} describes a centered Gaussian field with covariance $(g_\torus^U(x,y))_{x,y\in \torus}$, whereas by \eqref{1.18} the right hand  side describes a centered Gaussian field with covariance $(g_\lattice^{\widehat U}(\widehat{x},\widehat{y}))_{x,y\in \torus}$. Since from  \eqref{importantcondition}, see also Remark \ref{extraremark4}, we know $g_\torus^U(x,y) = g_\lattice^{\widehat U}(\widehat{x},\widehat{y})$ for all $x,y \in \torus$, the proof is complete. 
\end{proof}

\section{Proof of the main results}  \label{section2}

The main goal of this section is the proof of the precise version of \eqref{0.3}, in the form of Theorem \ref{theorem2.2} below, stating the approximation  in macroscopic boxes of the zero-average Gaussian free field $\Psi_\torus$ on $\torus$ by the Gaussian free field $\varphi_\lattice$ on $\lattice$. The strategy for proving it is to combine the coupling of $\Psi_\torus$ and $\varphi_\lattice$ from Lemma \ref{lemma1.2}, for some suitable choice of $U \subsetneq \torus$, with uniform bounds in $x$ of the variances of the expectations appearing in \eqref{1.20}. These uniform bounds are shown in Proposition \ref{proposition2.1}. After the proof of Theorem \ref{theorem2.2} we conclude the section with the derivation of the precise version of \eqref{0.4} in the form of Corollary \ref{corollary2.3}. It concerns the approximation of level sets of $\Psi_\torus$ by level sets of $\varphi_\lattice$ and directly follows from Theorem \ref{theorem2.2}. 

Recall from  Section \ref{section1}  the bijective map $x \mapsto \widehat x$  from  $\torus$ to $(-\frac{N}2,\frac{N}2]^d \cap \lattice$. Observe that $(-\frac{N}2,\frac{N}2]^d \cap \lattice = \{-\lceil \frac{N}{2}\rceil +1 ,\ldots, \lfloor \frac{N}{2} \rfloor  \}^d$.
We introduce the boxes 
\begin{align}
\mathcal U_N &\coloneqq \begin{cases} \{0\}^d \subseteq \lattice, &\text{if } N=1,2  \\ \{-\lceil \frac{N}{2}\rceil +2 ,\ldots, \lfloor \frac{N}{2} \rfloor -1  \}^d \subseteq \lattice, &\text{if } N \geq 3  \end{cases}   \label{extraequation13} \\
U_N &\coloneqq \pi_N(\mathcal U_N) \subsetneq \torus \quad \text{for } N \geq 1.    \label{extraequation11}
\end{align}
Additionally, for $\delta \in (\frac12,1)$ we also define the following boxes (possibly empty for $N$ small)  of side length roughly $\lfloor N-N^\delta \rfloor$
\begin{align}
\mathcal B_N^\delta &\coloneqq \Big(-\frac{N-N^\delta-2}2,\frac{N-N^\delta-2}2 \Big]^d \cap \lattice \subseteq \lattice  \label{extraequation14} \\
B_N^\delta &\coloneqq \pi_N(\mathcal B_N^\delta) \subseteq \torus.   \label{extraequation12}
\end{align}
Note that $\widehat{U_N} = \mathcal U_N$ and $\widehat{B_N^\delta} = \mathcal B_N^\delta$. Furthermore, $B_N^\delta \subseteq U_N$ and $\mathcal B_N^\delta \subseteq \mathcal U_N$. Finally, 
\begin{equation} \label{extraequation24} 
U_N \text{ is properly contained in $\torus$ since condition \eqref{importantcondition} is met}.
\end{equation}

\begin{proposition} \label{proposition2.1}
For all $\delta \in (\frac12,1)$, $N\geq 1$ and $x \in \mathcal B_N^\delta$ one has
\begin{equation} \label{2.1}  
\textup{Var}_{\mathbb P^\lattice}  \Big(  E_x^\lattice[\varphi_\lattice(X_{T_{\mathcal U_N}})]  \Big)  \leq cN^{-(2\delta-1)\frac{(d-2)(d-1)}{2d-3}}.   \vspace{-1pt}
\end{equation}
Moreover, for all $\delta \in (\frac12,1)$, $N\geq 1$ and $x \in B_N^\delta$ one has
\begin{equation}   \label{2.2}
\textup{Var}_{\mathbb P^\torus}  \Big(  E_x^\torus[\Psi_\torus(X_{T_{U_N}})]  \Big) \leq c (\ln(N))^{\frac{3d}2} N^{-(2\delta-1)\frac{(d-2)(d-1)}{2d-3}}.      \vspace{-1pt}
\end{equation}
\end{proposition}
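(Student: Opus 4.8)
The plan is to write each of the two variances exactly as the expectation of a Green function evaluated at the exit position of the walk, and then to feed in the long-range decay of that Green function together with the fact that $\mathcal B_N^\delta$ (inside $\mathcal U_N$) and $B_N^\delta$ (inside $U_N$) are at distance of order $N^\delta$ from the corresponding boundaries. For the lattice, since $(\varphi_\lattice(z))_z$ is centred Gaussian with covariance $g_\lattice$ and $X_{T_{\mathcal U_N}}\in\partial_\lattice\mathcal U_N$,
\[
\textup{Var}_{\mathbb P^\lattice}\big(E_x^\lattice[\varphi_\lattice(X_{T_{\mathcal U_N}})]\big)=\sum_{z,z'\in\partial_\lattice\mathcal U_N}P_x^\lattice[X_{T_{\mathcal U_N}}=z]\,P_x^\lattice[X_{T_{\mathcal U_N}}=z']\,g_\lattice(z,z').
\]
Because $g_\lattice^{\mathcal U_N}(x,z')=0$ for $z'\notin\mathcal U_N$, identity \eqref{1.4} gives $\sum_z P_x^\lattice[X_{T_{\mathcal U_N}}=z]\,g_\lattice(z,z')=g_\lattice(x,z')$, and then summing against $P_x^\lattice[X_{T_{\mathcal U_N}}=z']$, with $\sum_{z'}P_x^\lattice[X_{T_{\mathcal U_N}}=z']=1$, collapses the double sum to $E_x^\lattice[g_\lattice(X_{T_{\mathcal U_N}},x)]$. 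The same manipulation on the torus, with \eqref{1.7} in place of \eqref{1.4}, produces an additional term and yields $\textup{Var}_{\mathbb P^\torus}\big(E_x^\torus[\Psi_\torus(X_{T_{U_N}})]\big)=E_x^\torus[G_\torus(X_{T_{U_N}},x)]+\tfrac1{N^d}E_x^\torus[T_{U_N}]$; this extra term reflects that the field $\varphi_\torus^{U_N}$ of Lemma \ref{extralemma2}, unlike $\varphi_\lattice^{\mathcal U_N}$ behind \eqref{1.18}, is not independent of $(\Psi_\torus(z))_{z\notin U_N}$.

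It then remains to bound $E_x^\lattice[g_\lattice(X_{T_{\mathcal U_N}},x)]$, $E_x^\torus[G_\torus(X_{T_{U_N}},x)]$ and $\tfrac1{N^d}E_x^\torus[T_{U_N}]$, for which the input is geometric. From \eqref{extraequation13}--\eqref{extraequation14} one checks $\textup{dist}_\lattice(\mathcal B_N^\delta,\partial_\lattice\mathcal U_N)\geq\tfrac12N^\delta$, so for $x\in\mathcal B_N^\delta$ every $z\in\partial_\lattice\mathcal U_N$ has $|z-x|\geq\tfrac12N^\delta$. Likewise, since $U_N$ is properly contained in $\torus$ (\eqref{extraequation24}), Remark \ref{extraremark4} gives $\widehat{\partial_\torus U_N}=\partial_\lattice\mathcal U_N$, hence each $z\in\partial_\torus U_N$ has one coordinate of $\widehat z$ of modulus $\geq\tfrac12(N-1)$ while all coordinates of $\widehat x$ ($x\in B_N^\delta$) have modulus $\leq\tfrac12(N-N^\delta-2)$; comparing that coordinate of $\widehat z-\widehat x$ both directly and modulo $N$ forces $d_\torus(z,x)\geq\tfrac12N^\delta$. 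Using $\sum_z P_x[X_T=z]=1$ and the decays \eqref{1.2} for $g_\lattice$ and Proposition \ref{proposition1.1} for $G_\torus$ then gives
\[
E_x^\lattice\big[g_\lattice(X_{T_{\mathcal U_N}},x)\big]\leq cN^{\delta(2-d)},\qquad\big|E_x^\torus\big[G_\torus(X_{T_{U_N}},x)\big]\big|\leq c(\ln N)^{\frac{3d}2}N^{\delta(2-d)},
\]
while $\tfrac1{N^d}E_x^\torus[T_{U_N}]\leq cN^{2-d}$: by proper containment $E_x^\torus[T_{U_N}]=E_{\widehat x}^\lattice[T_{\mathcal U_N}]$, and the exit time of simple random walk from a box of side of order $N$ is $\leq cN^2$ by a quadratic barrier. (For the bounded range of small $N$ with $\mathcal B_N^\delta$ or $B_N^\delta$ non-empty the variances are trivially $\leq c$ and get absorbed into the constant; for smaller $N$ these boxes are empty.)

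To finish, note $N^{2-d}\leq N^{\delta(2-d)}$ since $\delta<1$, and $N^{\delta(2-d)}=N^{-\delta(d-2)}\leq N^{-(2\delta-1)\frac{(d-2)(d-1)}{2d-3}}$, the latter because $\delta\geq(2\delta-1)\frac{d-1}{2d-3}$ holds on $(\tfrac12,1)$ for $d\geq3$ — both sides are affine in $\delta$, reducing to $\tfrac12\geq0$ at $\delta=\tfrac12$ and to $2d-3\geq d-1$ at $\delta=1$ — which gives \eqref{2.1} and \eqref{2.2}. I expect the only genuinely delicate part to be the torus side: getting the right identity (the ``domain Markov'' heuristic is off by the correction $\tfrac1{N^d}E_x^\torus[T_{U_N}]$, which one must recognise and then dispose of) and the bookkeeping via proper containment that delivers $d_\torus(z,x)\gtrsim N^\delta$ and the transfer of the exit-time bound to $\torus$. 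A variant avoiding these identities — and, in view of the advertised role of Lemma \ref{extralemma1}, presumably closer to the route taken here — is to bound the double sums $\sum_{z,z'}P_x[X_T=z]P_x[X_T=z']g_\lattice(z,z')$, resp.\ with $G_\torus$, directly: the half-space reflection underlying Lemma \ref{extralemma1} yields $P_x^\lattice[X_{T_{\mathcal U_N}}=z]\leq c|x-z|^{1-d}$ (and the torus exit distribution is its $\pi_N$-image by \eqref{extraequation21}), after which one sums against \eqref{1.2}, resp.\ Proposition \ref{proposition1.1} — the subtle point being that for $\Psi_\torus$ the exit distribution is controlled through $|\widehat x-\widehat z|$ but $G_\torus$ through $d_\torus$, metrics that disagree on $\partial_\torus U_N$.
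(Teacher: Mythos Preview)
Your argument is correct and takes a genuinely different route from the paper. The paper keeps the double sum $\sum_{y,z}P_x[X_T=y]P_x[X_T=z]\,g(y,z)$ and splits it according to whether $d(y,z)<N^\gamma$ or $d(y,z)\geq N^\gamma$: on the near-diagonal it bounds $g\leq c$ and uses Lemma~\ref{extralemma1} to get $P_x[X_T=y]\leq c N^{\delta(1-d)}$ together with a count of $\leq cN^{(1+\gamma)(d-1)}$ pairs; off the near-diagonal it uses $g(y,z)\leq cN^{\gamma(2-d)}$ (resp.\ Proposition~\ref{proposition1.1}) and sums the probabilities to $1$. Optimising $\gamma=\frac{(2\delta-1)(d-1)}{2d-3}$ then produces the stated exponent. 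Your approach instead collapses the double sum via \eqref{1.4} (resp.\ \eqref{1.7}) to the single expectation $E_x[g_\lattice(X_{T_{\mathcal U_N}},x)]$ (resp.\ $E_x[G_\torus(X_{T_{U_N}},x)]+N^{-d}E_x[T_{U_N}]$), after which only the distance lower bound $\gtrsim N^\delta$ and the Green function decay are needed. This avoids Lemma~\ref{extralemma1} and the optimisation entirely, and in fact yields the strictly better exponent $\delta(d-2)$ before you weaken it to match the proposition; the price is the extra torus term $N^{-d}E_x[T_{U_N}]$, which you correctly identify and bound. Your closing paragraph accurately anticipates the paper's actual method as a ``variant''.
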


\begin{remark}
Actually, for the proof of Theorem \ref{theorem2.2} we only need, for each $\delta \in (\frac12,1)$, bounds of the form $c_\delta N^{-c'_\delta}$  of the variances   in \eqref{2.1} and \eqref{2.2} for some $c_\delta,c'_\delta > 0$ depending on $\delta$ (and $d$). \qed
\end{remark} 

\begin{proof}
We first prove \eqref{2.1}. Consider $x \in \mathcal B_N^\delta$. Expanding the variance in \eqref{2.1} we obtain for $\gamma \in (0,1)$ to be chosen later (below \eqref{2.4}) that
\begin{equation} \label{2.3}
\begin{split}
\textup{Var}_{\mathbb P^\lattice} \Big( E_x^\lattice[\varphi_\lattice(X_{T_{\mathcal U_N}})]\Big) = &\sum_{\substack{y,z \in \partial_\lattice \mathcal U_N \\ d_\lattice(y,z)<N^\gamma}} \!\!\!\!\! P_x^\lattice[X_{T_{\mathcal U_N}}=y] P_x^\lattice[X_{T_{\mathcal U_N}}=z] g_\lattice(y,z) \\
&+ \!\!\!\!\! \sum_{\substack{y,z \in \partial_\lattice \mathcal U_N \\ d_\lattice(y,z)\geq N^\gamma}}  \!\!\!\!\! P_x^\lattice[X_{T_{\mathcal U_N}}=y] P_x^\lattice[X_{T_{\mathcal U_N}}=z] g_\lattice(y,z).
\end{split}
\end{equation}
In the first sum on the right-hand side of \eqref{2.3} we use the bound $g_\lattice(y,z) \leq g_\lattice(0,0) = c$ for all $y,z$ (see \eqref{1.1}). Moreover, $P_x^\lattice[X_{T_{\mathcal U_N}}=y]$ is bounded by $c d_\lattice(x,y)^{1-d}$ by Lemma \ref{extralemma1}. (Indeed, choose $i\in \{1,\ldots,N^d\}$ and $\sigma \in \{\pm 1\}$  such that the half-space $\mathbb H^\sigma_i \coloneqq \{(u_1,\ldots,u_d) \in \lattice \, | \, \sigma u_i \geq 1 \}$ shifted by $y$ contains $\mathcal U_N$, i.e.~$\mathcal U_N \subseteq \mathbb  H^\sigma_i + y \coloneqq \{ u+y \, | \, u \in \mathbb  H^\sigma_i \}$. Then one has $\{ X_{T_{\mathcal U_N}}=y \} \subseteq \{ T_{\mathcal U_N} = T_{\mathbb H^\sigma_i +y} \}$ and so $P_x^\lattice[X_{T_{\mathcal U_N}}=y] = P_x^\lattice[X_{T_{\mathcal U_N}}=y, T_{\mathcal U_N} = T_{\mathbb  H^\sigma_i+y} ] \leq P_x^\lattice[X_{T_{\mathbb  H^\sigma_i +y }}=y] = P_{x-y}^\lattice[X_{T_{\mathbb  H^\sigma_i}}=0]$ using translation invariance in the last step. The latter quantity is bounded by $c|x-y|^{1-d} \leq c d_\lattice(x,y)^{1-d}$ by Lemma \ref{extralemma1} and symmetry.) 
Since $x \in \mathcal B_N^\delta$ and $y \in \partial_\lattice \mathcal U_N$, we find that $P_x^\lattice[X_{T_{\mathcal U_N}}=y] \leq c(N^\delta)^{1-d}$ by \eqref{extraequation13} and \eqref{extraequation14}. The same bound can be applied to $P_x^\lattice[X_{T_{\mathcal U_N}}=z]$. Note that the number of pairs $y,z$ in this first sum is at most $cN^{d-1} (N^\gamma)^{d-1}$.

In the second sum on the right hand side of \eqref{2.3} we use  \eqref{1.2} to estimate $g_\lattice(y,z) \leq c d_\lattice(y,z)^{2-d}  \leq cN^{\gamma(2-d)}$ for all $y,z$. What remains of the sum is bounded   by one. Thus, we have obtained that
\begin{equation} \label{2.4}
\textup{Var}_{\mathbb P^\lattice} \Big( E_x^\lattice[\varphi_\lattice(X_{T_{\mathcal U_N}})]\Big)  \leq cN^{2\delta(1-d)}N^{(1+\gamma)(d-1)} + c'N^{\gamma(2-d)}. \vspace{-1.5pt}
\end{equation}
Choosing $\gamma = \frac{(2\delta-1)(d-1)}{2d-3}$  the two expressions on the right hand side of \eqref{2.4} are of the same order and \eqref{2.1} follows (note that $\gamma \in (0,1)$ since $\delta \in (\frac12,1)$). 

For the proof of \eqref{2.2} we proceed as for \eqref{2.1}. Consider $x$ in $B_N^\delta$. Expanding the variance in \eqref{2.2} and using \eqref{extraequation21} one obtains for $\gamma \in (0,1)$ to be chosen later (below \eqref{2.6}) that
\begin{equation} \label{2.5}
\begin{split}
\textup{Var}_{\mathbb P^\torus} \Big( E_x^\torus[\Psi_\torus(X_{T_{U_N}})]\Big) =  \!\!\!\!&\sum_{\substack{y,z \in \partial_\torus U_N \\ d_\torus(y,z)< N^\gamma}} \!\!\!\!\! P_{\widehat x}^\lattice[X_{T_{\mathcal U_N}}= \widehat y] P_{\widehat x}^\lattice[X_{T_{\mathcal U_N}}= \widehat z] G_\torus(y,z) \\
&+  \!\!\!\!\! \sum_{\substack{y,z \in \partial_\torus  U_N \\ d_\torus(y,z)\geq N^\gamma}} \!\!\!\!\! P_{\widehat x}^\lattice[X_{T_{\mathcal U_N}}=\widehat y] P_{\widehat x}^\lattice[X_{T_{\mathcal U_N}}=\widehat z] G_\torus(y,z).
\end{split}
\end{equation}
We bound the first sum in \eqref{2.5} as in \eqref{2.3}. Again $P_{\widehat x}^\lattice[X_{T_{\mathcal U_N}}= \widehat y] \leq c(N^\delta)^{1-d}$ holds by Lemma \ref{extralemma1} and the same holds for $\widehat z$. On the other hand, we have $G_\torus(y,z) \leq c$ by Remark \ref{extraremark23}. In the second sum in \eqref{2.5} we use \eqref{1.8} to estimate $G_\torus(y,z)$ for all $y,z$. In this way, we obtain from \eqref{2.5}  that
\begin{equation} \label{2.6}
\textup{Var}_{\mathbb P^\torus} \Big( E_x^\torus[\varphi_\torus(X_{T_{U_N}})]\Big)  \leq cN^{2\delta(1-d)}N^{(1+\gamma)(d-1)} + c'(\ln(N))^{\frac{3d}2} N^{\gamma(2-d)}. \vspace{-1.5pt}
\end{equation}
Again choosing $\gamma = \frac{(2\delta-1)(d-1)}{2d-3}$ the bound \eqref{2.2} follows. 
\end{proof}

The next theorem is the main result of this section.

\begin{theorem} \label{theorem2.2}
For any $N\geq 1$ there exists a coupling $\mathbb Q_N$ of $\Psi_\torus$ and $\varphi_\lattice$ such that for all $\delta \in (\frac12,1)$ and $\varepsilon >0$ there is $c_\delta,c'_\delta > 0$ depending on $\delta$ (and $d$) with
\begin{equation} \label{2.7}
\mathbb Q_N \Big[ \sup_{x \in B_N^\delta} \big| \Psi_\torus(x) - \varphi_\lattice(\widehat x)  \big|  >  \varepsilon \Big] \leq  4 N^d \exp(-c_\delta\varepsilon^2 N^{c'_\delta})
\end{equation} 
(where $B_N^\delta \subseteq \torus$ is the box in \eqref{extraequation12}). In particular,  for all $\delta \in (\frac12,1)$ and $\varepsilon >0$  one has
\begin{equation} \label{2.8}
\lim_{N \to \infty} \mathbb Q_N \Big[ \sup_{x \in B_N^\delta} \big| \Psi_\torus(x) - \varphi_\lattice(\widehat x)  \big|  >  \varepsilon \Big] =0.
\end{equation} 
\end{theorem}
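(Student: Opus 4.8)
The plan is to instantiate the coupling of Lemma \ref{lemma1.2} at the specific set $U = U_N$, which is legitimate by \eqref{extraequation24}, and then to estimate the two ``correction'' terms appearing in \eqref{1.20} by combining the variance bounds of Proposition \ref{proposition2.1} with a Gaussian tail estimate and a union bound over $B_N^\delta$. Concretely, I would let $\mathbb Q_N$ be the coupling of $\Psi_\torus$ and $\varphi_\lattice$ furnished by Lemma \ref{lemma1.2} for $U = U_N$; this choice does not depend on $\delta$, as required. Since $\widehat{U_N} = \mathcal U_N$, the identity \eqref{1.20} yields, for every $x \in \torus$,
\begin{equation*}
\Psi_\torus(x) - \varphi_\lattice(\widehat x) = E_x^\torus[\Psi_\torus(X_{T_{U_N}})] - E_{\widehat x}^\lattice[\varphi_\lattice(X_{T_{\mathcal U_N}})],
\end{equation*}
so by the triangle inequality $\big|\Psi_\torus(x) - \varphi_\lattice(\widehat x)\big|$ is at most $\big|E_x^\torus[\Psi_\torus(X_{T_{U_N}})]\big| + \big|E_{\widehat x}^\lattice[\varphi_\lattice(X_{T_{\mathcal U_N}})]\big|$. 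Each of these two terms is a finite linear combination, with weights summing to $1$, of the jointly centered Gaussian variables $(\Psi_\torus(y))_{y \in \partial_\torus U_N}$, resp.~$(\varphi_\lattice(w))_{w \in \partial_\lattice \mathcal U_N}$; hence each is itself a centered Gaussian variable under $\mathbb Q_N$ (whose marginals are $\mathbb P^\torus$ and $\mathbb P^\lattice$), with variance equal to the quantity bounded in \eqref{2.2}, resp.~\eqref{2.1}, as soon as $x \in B_N^\delta$ (so that $\widehat x \in \widehat{B_N^\delta} = \mathcal B_N^\delta$).

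Next I would use the elementary tail bound $\mathbb P[|Z| > t] \le 2\exp(-t^2/(2\sigma^2))$ for a centered Gaussian $Z$ of variance $\sigma^2$. Splitting $\varepsilon = \tfrac\varepsilon2 + \tfrac\varepsilon2$, applying this to the two correction terms, and taking a union bound over the at most $N^d$ vertices of $B_N^\delta$ gives
\begin{equation*}
\mathbb Q_N\Big[\sup_{x \in B_N^\delta}\big|\Psi_\torus(x) - \varphi_\lattice(\widehat x)\big| > \varepsilon\Big] \le N^d\Big(2\exp\big(-\tfrac{\varepsilon^2}{8\sigma_\torus^2}\big) + 2\exp\big(-\tfrac{\varepsilon^2}{8\sigma_\lattice^2}\big)\Big),
\end{equation*}
where, uniformly over $x \in B_N^\delta$, $\sigma_\lattice^2 \le cN^{-\alpha}$ and $\sigma_\torus^2 \le c(\ln N)^{3d/2}N^{-\alpha}$ with $\alpha := (2\delta-1)\tfrac{(d-2)(d-1)}{2d-3} > 0$ (and also $\sigma_\torus^2, \sigma_\lattice^2 \le g_\lattice(0,0) = c$ by Remark \ref{extraremark23}). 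For $N$ large one absorbs the logarithm, $(\ln N)^{3d/2} \le N^{\alpha/2}$, so that $\tfrac{\varepsilon^2}{8\sigma_\torus^2}, \tfrac{\varepsilon^2}{8\sigma_\lattice^2} \ge c_\delta \varepsilon^2 N^{\alpha/2}$, which is \eqref{2.7} with $c'_\delta = \alpha/2$; the finitely many remaining (small) values of $N$ are handled by using the crude bound $\sigma^2 \le c$ together with shrinking $c_\delta$ so that the right-hand side of \eqref{2.7} is $\ge 1$ (recall that $B_N^\delta$ may be empty for small $N$, in which case the left-hand side vanishes). Finally, \eqref{2.8} is immediate, since $4N^d\exp(-c_\delta\varepsilon^2 N^{c'_\delta}) \to 0$ as $N \to \infty$.

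I do not anticipate a genuine obstacle: the substantive work has already been carried out in Propositions \ref{proposition1.1} and \ref{proposition2.1} and in Lemma \ref{lemma1.2}. The points requiring some care are (i) checking that the two correction terms are honestly centered Gaussians whose variances are precisely the quantities estimated in Proposition \ref{proposition2.1} — this uses that $\mathbb Q_N$ has the correct marginals and that $\widehat{U_N} = \mathcal U_N$, $B_N^\delta \subseteq U_N$ — and (ii) the bookkeeping involved in absorbing the $(\ln N)^{3d/2}$ factor into a power of $N$ and in treating the small values of $N$.
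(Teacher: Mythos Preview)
Your proposal is correct and follows essentially the same route as the paper's proof: instantiate Lemma \ref{lemma1.2} at $U=U_N$, rewrite $\Psi_\torus(x)-\varphi_\lattice(\widehat x)$ as the difference of the two correction terms, apply the Gaussian tail bound with the variance estimates from Proposition \ref{proposition2.1}, and take a union bound over $B_N^\delta$. The only additions you make are some extra bookkeeping (absorbing the $(\ln N)^{3d/2}$ factor and handling small $N$), which the paper leaves implicit.
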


\begin{proof}
Consider  the box $U_N \subsetneq \torus$ in \eqref{extraequation11}. By \eqref{extraequation24} the assumption in  Lemma \ref{lemma1.2} is satisfied and we obtain a coupling $\mathbb Q_N$  of $\Psi_\torus$ and $\varphi_\lattice$ such that for all $\delta \in (\frac12,1)$ and $\varepsilon>0$
\begin{equation} \label{2.9}
\begin{split}
\mathbb Q_N \Big[ &\sup_{x \in B_N^\delta} \big| \Psi_\torus(x) - \varphi_\lattice(\widehat x)   \big|  >  \varepsilon \Big] \\
&\leq \sum_{x \in B_N^\delta} \Bigg( \mathbb Q_N \bigg[ \Big| E_{\widehat x}^\lattice[ \varphi_\lattice(X_{T_{\mathcal U_N}}) ]\Big| > \frac{\varepsilon}2 \bigg]  +   \mathbb Q_N \bigg[ \Big| E_x^\torus[ \Psi_\torus(X_{T_{U_N}})] \Big| > \frac{\varepsilon}2 \bigg]  \Bigg).
\end{split}
\end{equation} 
The expectations appearing in \eqref{2.9} are centered Gaussian variables with respect to $\mathbb P^\lattice$ and $\mathbb P^\torus$. Thus, for any $x\in B_N^\delta$, the exponential Markov inequality  leads to
\begin{equation} \label{2.10}
\begin{split}
&\mathbb Q_N \bigg[ \Big| E_{\widehat x}^\lattice[ \varphi_\lattice(X_{T_{\mathcal U_N}}) ]\Big| > \frac{\varepsilon}2 \bigg] \\
&\qquad \qquad \qquad\leq 2 \exp\bigg(-\frac{(\varepsilon/2)^2}{2 \textup{Var}_{\mathbb P^\lattice} \big( E_{\widehat x}^\lattice[\varphi_\lattice(X_{T_{\mathcal U_N}})]\big)} \bigg)  
 \overset{\eqref{2.1}}{\leq} 2e^{-c\varepsilon^2 N^{c_\delta}}  \\
&\mathbb Q_N \bigg[ \Big| E_x^\torus[ \Psi_\torus(X_{T_{U_N}}) ]\Big| > \frac{\varepsilon}2 \bigg] \\
&\qquad \qquad \qquad\leq 2 \exp\bigg(-\frac{(\varepsilon/2)^2}{2 \textup{Var}_{\mathbb P^\torus} \big( E_x^\torus[\Psi_\torus(X_{T_{U_N}})]\big)} \bigg) 
 \overset{\eqref{2.2}}{\leq} 2e^{-c_\delta \varepsilon^2 N^{c'_\delta}}
\end{split}
\end{equation}
for some $c_\delta, c'_\delta > 0$. The desired result follows from the combination of  \eqref{2.10} and \eqref{2.9} (note that $|B_N^\delta| \leq N^d$). 
\end{proof}

\begin{remark}   \label{extraremark1}
1) Considering the precise exponents in \eqref{2.1} and \eqref{2.2} when bounding in \eqref{2.10} shows that  one can actually take any
$c'_\delta < (2\delta-1) \frac{(d-2)(d-1)}{2d-3}$ in \eqref{2.7}.  

\noindent 2) Let us mention that the asymptotic approximation in \eqref{2.8} becomes false if we replace $B_N^\delta$ by a box of side length $N-\delta$ with $\delta\geq 0$  centered at zero since the different structure of $\torus$ and $\lattice$ starts to play a role. 
In that case one can choose vertices $x_N$ and $y_N$, $N\geq1$, at `opposite borders' of the box which remain at fixed distance in $\torus$ as $N \to \infty$ but for which the $\lattice$-distance between $\widehat{x_N}$ and $\widehat{y_N}$ tends to infinity. So the covariance $g_\lattice(\widehat{x_N},\widehat{y_N})$ between $\varphi_\lattice(\widehat{x_N})$ and $\varphi_\lattice(\widehat{y_N})$ tends to zero (see \eqref{1.2}) but the covariance $G_\torus(x_N, y_N)$ between $\Psi_\torus(x_N)$ and $\Psi_\torus(y_N)$ stays bounded away from zero (actually, one can show that $G_\torus(x_N,y_N) \to  g_\lattice(\widehat{x}_1,\widehat{y_1})>0$ as $N\to \infty$). From this fact one readily infers that the corresponding statement to \eqref{2.8} breaks down. \qed
\end{remark}

The main Theorem \ref{theorem2.2} provides a tool to compare the level sets of $\Psi_\torus$ and $\varphi_\lattice$. Recall from the introduction the notation 
\begin{equation}  \label{2.11}
E_{\Psi_\torus}^{\geq h} = \{x\in \torus \, | \,  \Psi_\torus(x)\geq h\} \quad \text{and} \quad  E_{\varphi_\lattice}^{\geq h} = \{x\in \lattice \, | \, \varphi_\lattice(x)\geq h\} 
\end{equation}
for the level sets of the Gaussian free fields, where $h\in \mathbb R$.

\begin{corollary}  \label{corollary2.3}
For any $N\geq 1$ there exists a coupling $\mathbb Q_N$ of $\Psi_\torus$ and $\varphi_\lattice$ such that for all $\delta \in (\frac12,1)$,  $\varepsilon >0$ and $h \in \mathbb R$ there is $c_\delta,c'_\delta > 0$ depending on $\delta$ (and $d$) with
\begin{equation}  \label{2.12}
\mathbb Q_N \Big[ \pi_N \big( E_{\varphi_\lattice}^{\geq h+\varepsilon} \cap \mathcal B_N^\delta  \big)  \subseteq  \big( E_{\Psi_\torus}^{\geq h} \cap B_N^\delta \big) \subseteq \pi_N \big( E_{\varphi_\lattice}^{\geq h-\varepsilon} \cap \mathcal B_N^\delta \big)   \Big]  \geq 1 -4 N^d \exp(-c_\delta\varepsilon^2 N^{c'_\delta})
\end{equation}
(where $B_N^\delta \subseteq \torus$ and $\mathcal B_N^\delta \subseteq \lattice$ are the boxes in \eqref{extraequation12} and \eqref{extraequation14}). In particular, for all $\delta \in (\frac12,1)$, $\varepsilon >0$ and $h \in \mathbb R$ one has 
\begin{equation}  \label{2.13}
\lim_{N\to \infty} \mathbb Q_N \Big[ \pi_N \big( E_{\varphi_\lattice}^{\geq h+\varepsilon} \cap \mathcal B_N^\delta  \big)  \subseteq  \big( E_{\Psi_\torus}^{\geq h} \cap B_N^\delta \big) \subseteq \pi_N \big( E_{\varphi_\lattice}^{\geq h-\varepsilon} \cap \mathcal B_N^\delta \big)   \Big] =1.
\end{equation}
\end{corollary}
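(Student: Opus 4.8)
The plan is to run everything off the coupling $\mathbb Q_N$ furnished by Theorem \ref{theorem2.2} and then argue purely deterministically on the good event
\[
A_N^\varepsilon \coloneqq \Big\{ \sup_{x \in B_N^\delta} \big| \Psi_\torus(x) - \varphi_\lattice(\widehat x) \big| \leq \varepsilon \Big\},
\]
which is the complement of the event controlled in \eqref{2.7}, so $\mathbb Q_N[A_N^\varepsilon] \geq 1 - 4N^d \exp(-c_\delta \varepsilon^2 N^{c'_\delta})$. The mechanism is that on $A_N^\varepsilon$ the one-sided bounds $\Psi_\torus(x) \geq \varphi_\lattice(\widehat x) - \varepsilon$ and $\varphi_\lattice(\widehat x) \geq \Psi_\torus(x) - \varepsilon$ let a threshold condition pass from one field to the other at the cost of an $\varepsilon$-shift in the level. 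I would also recall from Section \ref{section1} that $x \mapsto \widehat x$ is a bijection of $B_N^\delta$ onto $\mathcal B_N^\delta = \widehat{B_N^\delta}$ with inverse $\pi_N|_{\mathcal B_N^\delta}$, and that consequently for $S \subseteq \mathcal B_N^\delta$ any $x \in \pi_N(S)$ lies in $B_N^\delta$ with $\widehat x \in S$.

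First I would verify the left inclusion in \eqref{2.12} on $A_N^\varepsilon$: if $x \in \pi_N\big(E_{\varphi_\lattice}^{\geq h+\varepsilon} \cap \mathcal B_N^\delta\big)$ then $x \in B_N^\delta$ and $\widehat x \in \mathcal B_N^\delta$ with $\varphi_\lattice(\widehat x) \geq h+\varepsilon$, so on $A_N^\varepsilon$ we get $\Psi_\torus(x) \geq \varphi_\lattice(\widehat x) - \varepsilon \geq h$, i.e.\ $x \in E_{\Psi_\torus}^{\geq h} \cap B_N^\delta$. For the right inclusion, if $x \in E_{\Psi_\torus}^{\geq h} \cap B_N^\delta$ then its lift $\widehat x \in \mathcal B_N^\delta$ satisfies, on $A_N^\varepsilon$, $\varphi_\lattice(\widehat x) \geq \Psi_\torus(x) - \varepsilon \geq h-\varepsilon$, hence $\widehat x \in E_{\varphi_\lattice}^{\geq h-\varepsilon} \cap \mathcal B_N^\delta$ and $x = \pi_N(\widehat x) \in \pi_N\big(E_{\varphi_\lattice}^{\geq h-\varepsilon} \cap \mathcal B_N^\delta\big)$. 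This shows $A_N^\varepsilon$ is contained in the event appearing in \eqref{2.12}, which yields \eqref{2.12}; letting $N \to \infty$, the bound $4N^d \exp(-c_\delta \varepsilon^2 N^{c'_\delta}) \to 0$ gives \eqref{2.13}.

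There is no real analytic obstacle: the entire quantitative content is already in Theorem \ref{theorem2.2}, and this corollary is a soft consequence. The only points needing care are bookkeeping ones — passing cleanly between $B_N^\delta$ and $\mathcal B_N^\delta$ via the bijection $x \leftrightarrow \widehat x$ so that projections and preimages land where expected, and observing that on the relevant event only the two one-sided inequalities above are ever invoked (so working with $A_N^\varepsilon$, the complement of the event in \eqref{2.7}, is exactly what is needed).
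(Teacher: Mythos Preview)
Your proposal is correct and follows exactly the paper's approach: the paper's proof consists of the single observation that $\{\sup_{x \in B_N^\delta} |\Psi_\torus(x) - \varphi_\lattice(\widehat x)| \leq \varepsilon\}$ is contained in the event in \eqref{2.12}, and then invokes Theorem~\ref{theorem2.2}. You have simply spelled out the elementary set-inclusion verification that the paper leaves implicit.
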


\begin{proof}
The event $\{ \sup_{x \in B_N^\delta} | \Psi_\torus(x) - \varphi_\lattice(\widehat x)  |  \leq  \varepsilon  \}$ is contained in the event inside the probability in \eqref{2.12}. Therefore, the statement follows from Theorem \ref{theorem2.2}.
\end{proof}
\begin{remark}
By Remark \ref{extraremark1} any $c'_\delta < (2\delta-1) \frac{(d-2)(d-1)}{2d-3}$ can be chosen as constant in \eqref{2.12}. \qed
\end{remark}

\section{Applications to level-set percolation}  \label{section3}

As mentioned in the introduction Corollary \ref{corollary2.3} is a (stronger) analogue of \cite{5}, Theorem 1.1, and \cite{6}, Theorem 1.2, in the Gaussian free field setting. As such it has  implications concerning level-set percolation of the Gaussian free field. Recall the two critical parameters $h_\star$ and $h_{\star\star}$ from the introduction (see \eqref{0.5} and thereafter) and also the notation $E_{\Psi_\torus}^{\geq h}$ resp.~$E_{\varphi_\lattice}^{\geq h}$ with $h\in \mathbb R$ from \eqref{2.11}. 
We further denote by $\mathcal C_{\text{max}}^{\torus,h}$ an arbitrary connected component of $E_{\Psi_\torus}^{\geq h}$ with maximal number of vertices (we will only be interested in its cardinality) and by $\mathcal C_{x}^{\torus,h}$ resp.~$\mathcal C_{x}^{\lattice,h}$ the connected component of $E_{\Psi_\torus}^{\geq h}$ resp.~$E_{\varphi_\lattice}^{\geq h}$ containing $x \in \torus$ resp.~$x \in \lattice$. Finally, we abbreviate o$\ = \pi_N(0) \in \torus$ for all $N\geq 1$.

Theorem \ref{theorem3.1} below shows that in the subcritical phase $h > h_\star$ with high probability for large $N$ there does not exist a macroscopic connected component of $E_{\Psi_\torus}^{\geq h}$. Actually, there is a large $h$ regime ($h>h_{\star\star}$) where with high probability for large $N$ all connected components of $E_{\Psi_\torus}^{\geq h}$ are  microscopic. This is Theorem \ref{theorem3.2}. In the supercritical phase $h<h_\star$ Theorem \ref{theorem3.4}   establishes with high probability for large $N$ the existence of a macroscopic (in diameter sense) connected component of $E_{\Psi_\torus}^{\geq h}$. The proofs of these results share common features with their counterparts in \cite{5} resp.~\cite{6} in the case of the vacant set of simple random walk on $\torus$ run up to time $uN^d$ and random interlacements at level $u$ on $\lattice$.
The section ends with open questions in the supercritical regime $h<h_\star$, see Remark \ref{remark3.5}.

\begin{theorem}[subcritical phase, $h>h_\star$] \label{theorem3.1}
For all $h> h_\star$ and $\xi>0$ it holds that
\begin{equation*}
\lim_{N\to\infty} \mathbb P^\torus \Big[ \big| \mathcal C_{\textup{max}}^{\torus,h} \big| \geq \xi N^d \Big] = 0.
\end{equation*}
\end{theorem}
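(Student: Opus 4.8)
The plan is to transfer the known subcritical behaviour of $\varphi_\lattice$ to $\Psi_\torus$ via the coupling of Corollary~\ref{corollary2.3}. Fix $h>h_\star$ and $\xi>0$, and pick some $h'\in(h_\star,h)$; set $\varepsilon\coloneqq h-h'>0$. Choose any $\delta\in(\tfrac12,1)$ so that the box $\mathcal B_N^\delta$ has volume $(1-o(1))N^d$, hence $|\torus\setminus B_N^\delta|=o(N^d)$. Under $\mathbb Q_N$, on the high-probability event in \eqref{2.13} we have $E_{\Psi_\torus}^{\geq h}\cap B_N^\delta\subseteq\pi_N(E_{\varphi_\lattice}^{\geq h-\varepsilon}\cap\mathcal B_N^\delta)=\pi_N(E_{\varphi_\lattice}^{\geq h'}\cap\mathcal B_N^\delta)$, and since $\pi_N$ restricted to $\mathcal B_N^\delta$ is a graph isomorphism onto $B_N^\delta$, any connected component of $E_{\Psi_\torus}^{\geq h}$ that stays inside $B_N^\delta$ is contained in the image under $\pi_N$ of a connected component of $E_{\varphi_\lattice}^{\geq h'}$. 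Therefore $|\mathcal C_{\textup{max}}^{\torus,h}|$ is bounded, up to the negligible contribution of vertices outside $B_N^\delta$, by the largest cluster of $E_{\varphi_\lattice}^{\geq h'}$ meeting $\mathcal B_N^\delta$.

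The key probabilistic input is the subcriticality of $\varphi_\lattice$ at level $h'>h_\star$, namely that $\mathbb P^\lattice[0\xleftrightarrow{\varphi_\lattice\geq h'}\infty]=0$, which by the long-range correlation structure can be upgraded (this is precisely the type of statement available from the references on $h_\star$, e.g.\ \cite{7}, \cite{23}, or can be extracted by a union bound over starting points together with $\mathbb P^\lattice[|\mathcal C_0^{\lattice,h'}|\geq n]\to 0$) to a statement that large clusters of $E_{\varphi_\lattice}^{\geq h'}$ inside $\mathcal B_N^\delta$ are atypical. Concretely, I would argue: on the good coupling event, $\{|\mathcal C_{\textup{max}}^{\torus,h}|\geq\xi N^d\}$ forces either the existence of an $x\in\mathcal B_N^\delta$ with $|\mathcal C_x^{\lattice,h'}|\geq\tfrac\xi2 N^d$, or the event fails; a union bound over $x\in\mathcal B_N^\delta$ then gives $\mathbb P^\torus[|\mathcal C_{\textup{max}}^{\torus,h}|\geq\xi N^d]\leq N^d\,\mathbb P^\lattice[|\mathcal C_0^{\lattice,h'}|\geq\tfrac\xi2 N^d]+o(1)$.

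The main obstacle is therefore quantitative: the naive bound $N^d\cdot\mathbb P^\lattice[|\mathcal C_0^{\lattice,h'}|\geq\tfrac\xi2 N^d]$ requires that the probability of a single vertex lying in a cluster of volume $cN^d$ decays faster than $N^{-d}$. For $h'>h_{\star\star}$ this is immediate from the exponential/stretched-exponential connectivity decay of \cite{7}, Theorem 2.6 (improved in \cite{13}), since then $\mathbb P^\lattice[|\mathcal C_0^{\lattice,h'}|\geq n]$ decays stretched-exponentially in $n$. For $h'\in(h_\star,h_{\star\star})$, where only $\mathbb P^\lattice[|\mathcal C_0^{\lattice,h'}|=\infty]=0$ is known a priori, one needs a genuinely subexponential-but-still-summable-against-$N^d$ estimate; the right way around this is to not demand a single huge cluster but to observe that $|\mathcal C_{\textup{max}}^{\torus,h}|\geq\xi N^d$ forces many disjoint moderately large clusters, or alternatively to use that $\{|\mathcal C_x^{\lattice,h'}|\geq \xi N^d/2\}$ for some $x$ in the box implies a crossing of a sub-box of side $\sim N^{1/d}$ by $E_{\varphi_\lattice}^{\geq h'}$, whose probability tends to $0$ by definition of $h_\star$ together with a sprinkling/renormalisation argument (as in the analogous random interlacements result \cite{5}), and multiply by the polynomially-many translates. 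I would follow the \cite{5} scheme: use Corollary~\ref{corollary2.3} with a sprinkled level $h'\in(h_\star,h)$ to reduce to a statement purely about $\varphi_\lattice$, then invoke the subcritical-phase estimate for $\varphi_\lattice$ to kill the union bound over the $O(N^d)$ relevant sub-boxes or starting points, and finally absorb the $o(N^d)$ boundary vertices of $B_N^\delta$ into the error.
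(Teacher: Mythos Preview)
Your reduction via the coupling is correct, but the union bound you propose introduces a factor of $N^d$ that you then cannot kill for $h'\in(h_\star,h_{\star\star}]$. You recognise this yourself: in that range only $\mathbb P^\lattice[|\mathcal C_0^{\lattice,h'}|=\infty]=0$ is available, and nothing in the paper or its references gives a polynomial (let alone faster) decay of $\mathbb P^\lattice[|\mathcal C_0^{\lattice,h'}|\geq cN^d]$. Your two suggested workarounds do not close the gap: a single cluster of size $\xi N^d$ does not produce ``many disjoint moderately large clusters'', and a cluster of volume $\sim N^d$ inside a box of side $N$ need not cross any fixed sub-box of side $N^{1/d}$ (and even if it did, you would again face a polynomial union bound requiring a quantitative crossing estimate you do not have). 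So as written the argument only proves the theorem for $h>h_{\star\star}$.

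The paper avoids the union bound altogether by a first-moment trick that you are missing. Observe that if $|\mathcal C_{\textup{max}}^{\torus,h}|\geq\xi N^d$ then at least $\xi N^d$ vertices $x$ satisfy $|\mathcal C_x^{\torus,h}|\geq\xi N^d$; hence
\[
\mathbb P^\torus\big[|\mathcal C_{\textup{max}}^{\torus,h}|\geq\xi N^d\big]
=\mathbb P^\torus\Big[\textstyle\sum_{x\in\torus}\mathds 1_{\{|\mathcal C_x^{\torus,h}|\geq\xi N^d\}}\geq\xi N^d\Big]
\leq\frac{1}{\xi}\,\mathbb P^\torus\big[|\mathcal C_{\textup o}^{\torus,h}|\geq\xi N^d\big]
\]
by Markov's inequality and translation invariance. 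The $N^d$ from the sum cancels against the threshold $\xi N^d$, so no union-bound factor survives. One then applies the coupling exactly as you describe to bound the right-hand side by $\mathbb P^\lattice[|\mathcal C_0^{\lattice,h'}|\geq a_N]+o(1)$ with $a_N\to\infty$, and this tends to $\mathbb P^\lattice[|\mathcal C_0^{\lattice,h'}|=\infty]=0$ with no quantitative input beyond the definition of $h_\star$.
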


\begin{proof}
Consider $h> h_\star$ and $\xi>0$. Choose $\varepsilon>0$  such that $h-\varepsilon>h_\star$. By Markov's inequality it holds that
\begin{equation} \label{3.1}
\begin{split}
\mathbb P^\torus \Big[ \big| \mathcal C_{\textup{max}}^{\torus,h} \big| &\geq \xi N^d \Big] = \mathbb P^\torus \bigg[ \sum_{x \in \torus} \mathds{1}_{ \big\{ | \mathcal C_{x}^{\torus,h} | \geq \xi N^d \big \}}   \geq \xi N^d  \bigg]  \\
&\leq \frac{1}{\xi N^d} \mathbb E^\torus \bigg[ \sum_{x \in \torus} \mathds{1}_{ \big\{ | \mathcal C_{x}^{\torus,h} | \geq \xi N^d \big \}}  \bigg] = \frac1{\xi} \mathbb P^\torus \Big[ \big| \mathcal C_{\text{o}}^{\torus,h} \big| \geq \xi N^d \Big].
\end{split}
\end{equation}
Choose any $\delta \in (\frac12,1)$ and set $a_N \coloneqq \xi N^d - | \torus \setminus B_N^\delta |$. Then, Corollary \ref{corollary2.3} implies that
\begin{equation} \label{3.2}
\begin{split}
\mathbb P^\torus &\Big[ \big| \mathcal C_{\text{o}}^{\torus,h} \big| \geq \xi N^d \Big] \leq \mathbb P^\torus \Big[ \big| \mathcal C_{\text{o}}^{\torus,h} \cap B_N^\delta \big| \geq a_N \Big]   \\
&\leq \mathbb Q_N \Big[  \big|   \mathcal C_{0}^{\lattice,h-\varepsilon} \cap \mathcal B_N^\delta \big| \geq a_N    \Big]  +  \mathbb Q_N \Big[ \big( E_{\Psi_\torus}^{\geq h} \cap B_N^\delta \big) \nsubseteq  \pi_N \big( E_{\varphi_\lattice}^{\geq h-\varepsilon} \cap \mathcal B_N^\delta \big)   \Big]  \\
&\leq \mathbb P^\lattice \Big[  \big|   \mathcal C_{0}^{\lattice,h-\varepsilon} \big| \geq a_N    \Big]   +  \mathbb Q_N \Big[ \big( E_{\Psi_\torus}^{\geq h} \cap B_N^\delta \big) \nsubseteq  \pi_N \big( E_{\varphi_\lattice}^{\geq h-\varepsilon} \cap \mathcal B_N^\delta \big)   \Big]  \\
&\xrightarrow[\eqref{2.13}]{N\to \infty} \mathbb P^\lattice \Big[  \big|   \mathcal C_{0}^{\lattice,h-\varepsilon} \big| = \infty    \Big]  \overset{h-\varepsilon>h_\star}{=} 0,
\end{split}
\end{equation}
since for large $N$, $a_N \geq \xi N^d - (N^d- (N-N^\delta-2)^d) = (\xi+(1-N^{\delta-1}-\frac2N)^d-1)N^d \longrightarrow \infty$ as $N$ goes to infinity. The combination of \eqref{3.1} and \eqref{3.2} concludes the proof.
\end{proof}

\begin{theorem}[strongly non-percolative phase, $h>h_{\star\star}$] \label{theorem3.2}
For all $h>h_{\star\star}$, $\lambda > d$ and $\rho>0$ it holds that
\begin{equation*}
\lim_{N\to\infty} N^\rho \mathbb P^\torus \Big[ \big| \mathcal C_{\textup{max}}^{\torus,h} \big| \geq (\ln(N))^\lambda \Big] = 0.
\end{equation*}
\end{theorem}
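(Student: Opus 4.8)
The plan is to mimic the proof of Theorem \ref{theorem3.1}, but now exploiting the much stronger connectivity decay available above $h_{\star\star}$. Specifically, for $h>h_{\star\star}$ one has (see \cite{7}, Theorem 2.6, and its improvement \cite{13}, Theorem 2.1) a rapidly decaying bound on $\mathbb P^\lattice[0 \xleftrightarrow{\varphi_\lattice \geq h} x]$; in particular $\mathbb P^\lattice[\,|\mathcal C_0^{\lattice,h}| \geq n\,]$ decays faster than any polynomial in $n$ (indeed stretched-exponentially, with a logarithmic correction when $d=3$), which is what is needed to beat the $N^d$ prefactor coming from the union bound over the torus and the extra $N^\rho$ factor in the statement.

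First I would reduce to the connected component of a single vertex exactly as in \eqref{3.1}: by translation invariance of $\mathbb P^\torus$ and Markov's inequality,
\begin{equation*}
N^\rho \,\mathbb P^\torus \Big[ \big| \mathcal C_{\textup{max}}^{\torus,h} \big| \geq (\ln N)^\lambda \Big] \;\leq\; N^{\rho}\, \frac{N^d}{(\ln N)^\lambda}\, \mathbb P^\torus \Big[ \big| \mathcal C_{\textup{o}}^{\torus,h} \big| \geq (\ln N)^\lambda \Big].
\end{equation*}
Next, fix $\varepsilon>0$ with $h-\varepsilon>h_{\star\star}$ and a $\delta\in(\tfrac12,1)$, and run the same comparison as in \eqref{3.2}: on the good event of Corollary \ref{corollary2.3} the set $\mathcal C_{\textup{o}}^{\torus,h}\cap B_N^\delta$ is contained in $\pi_N(\mathcal C_0^{\lattice,h-\varepsilon}\cap\mathcal B_N^\delta)$, so writing $b_N \coloneqq (\ln N)^\lambda - |\torus\setminus B_N^\delta|$ one gets
\begin{equation*}
\mathbb P^\torus \Big[ \big| \mathcal C_{\textup{o}}^{\torus,h} \big| \geq (\ln N)^\lambda \Big] \;\leq\; \mathbb P^\lattice\Big[\, \big|\mathcal C_0^{\lattice,h-\varepsilon}\big| \geq b_N \,\Big] \;+\; 4N^d\exp(-c_\delta \varepsilon^2 N^{c'_\delta}).
\end{equation*}
Here there is a subtlety compared with Theorem \ref{theorem3.1}: since $(\ln N)^\lambda$ is much smaller than $N^d$, the quantity $b_N$ is in fact negative for large $N$, i.e.\ the event $\{|\mathcal C_{\textup{o}}^{\torus,h}|\geq(\ln N)^\lambda\}$ forces $\mathcal C_{\textup{o}}^{\torus,h}$ to have vertices outside $B_N^\delta$. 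To handle this I would instead not truncate to $B_N^\delta$ but argue directly that a component of size $\geq (\ln N)^\lambda$ necessarily contains a vertex $x\in\torus$ together with another vertex $y$ with $d_\torus(x,y)\geq c(\ln N)^{\lambda/d}$ in the same component; then a union bound over the $N^d$ choices of $x$ (using translation invariance to fix $x=\mathrm{o}$), over the choice of $y$, together with Corollary \ref{corollary2.3} applied with a box large enough to contain a geodesic between $\widehat x$ and $\widehat y$, reduces matters to $\mathbb P^\lattice[0\xleftrightarrow{\varphi_\lattice\geq h-\varepsilon} y]$ with $|y|\geq c(\ln N)^{\lambda/d}$, which by the above-mentioned results is at most $\exp(-c(\ln N)^{\lambda/d})$ (up to a logarithmic correction in the exponent when $d=3$).

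Finally I would collect the error terms: the coupling error $4N^d\exp(-c_\delta\varepsilon^2 N^{c'_\delta})$ is super-polynomially small and is killed by the $N^{\rho+d}$ prefactor, while the main term contributes at most $N^{\rho+d}(\ln N)^{-\lambda}\cdot (\text{number of }y)\cdot \exp(-c(\ln N)^{\lambda/d})$; since $\exp(-c(\ln N)^{\lambda/d})$ decays faster than any negative power of $N$, this tends to $0$. The main obstacle I anticipate is precisely the one just flagged — because the target size $(\ln N)^\lambda$ is sub-macroscopic, one cannot simply intersect with $B_N^\delta$ and pass to $|\mathcal C_0^{\lattice,h-\varepsilon}|=\infty$ as in Theorem \ref{theorem3.1}; instead one must extract a pair of points at distance growing like a power of $\ln N$ within a common cluster and invoke the quantitative connectivity decay above $h_{\star\star}$, being a little careful that the box to which Corollary \ref{corollary2.3} is applied (side length $N-N^\delta$) still comfortably contains such a pair and a path-free spatial separation argument. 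The hypothesis $\lambda>d$ is exactly what guarantees $(\ln N)^{\lambda/d}$ grows faster than $\ln N$, so that $\exp(-c(\ln N)^{\lambda/d})$ beats the polynomial factors $N^{\rho+d}$.
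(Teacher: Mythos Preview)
Your proposal is correct and uses exactly the same ingredients as the paper --- a union bound to reduce to the cluster of a fixed vertex, the coupling of Corollary \ref{corollary2.3}, a volume-to-diameter conversion, and the stretched-exponential connectivity decay above $h_{\star\star}$ from \cite{13}. The only real difference is how the ``subtlety'' you flag is resolved.

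The paper handles it more directly. Rather than introduce $b_N=(\ln N)^\lambda-|\torus\setminus B_N^\delta|$ (which, as you note, is negative), it observes that since $\textup{o}$ sits at the center of $B_N^\delta$ at distance at least $\tfrac{N-N^\delta-2}{2}\geq(\ln N)^\lambda$ from $\partial_\torus B_N^\delta$ for large $N$, one has the \emph{equality}
\[
\big\{|\mathcal C_{\textup{o}}^{\torus,h}|\geq(\ln N)^\lambda\big\}=\big\{|\mathcal C_{\textup{o}}^{\torus,h}\cap B_N^\delta|\geq(\ln N)^\lambda\big\}.
\]
Indeed, if the cluster of $\textup{o}$ ever leaves $B_N^\delta$ it already contains a path of length $\geq(\ln N)^\lambda$ inside $B_N^\delta$, so the intersection is large in either case. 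With this in hand the truncation to $B_N^\delta$ is free, the coupling transfers the event to $\{|\mathcal C_0^{\lattice,h-\varepsilon}\cap\mathcal B_N^\delta|\geq(\ln N)^\lambda\}$, and \emph{only then} does the paper convert volume to diameter on $\lattice$ (such a cluster must reach distance $\tfrac12\lfloor(\ln N)^{\lambda/d}\rfloor$ from $0$), after which \cite{13}, Theorem~2.1, finishes exactly as you describe.

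Your workaround --- do the volume-to-diameter step on the torus first, then couple --- also works, but it is the same argument rearranged: your caveat that ``the box \dots\ still comfortably contains such a pair and a path'' is precisely the displayed equality in disguise. So there is no genuine obstacle where you anticipate one; the one-line observation above is the clean fix, and also explains why the paper uses a plain union bound (factor $N^d$) rather than Markov's inequality at the first step.
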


\begin{proof}
Fix $h>h_{\star\star}$ and choose $\varepsilon>0$  such that $h-\varepsilon > h_{\star\star}$. Consider $\lambda>d$, $\rho>0$ and $\delta \in (\frac12,1)$. For $N$ large a union bound and the couplings $\mathbb Q_N$ from Corollary \ref{corollary2.3} lead to
\begin{equation}  \label{3.3}
\begin{split}
N^\rho \mathbb P^\torus \Big[ &\big| \mathcal C_{\textup{max}}^{\torus,h} \big| \geq (\ln(N))^\lambda \Big] \leq N^\rho N^d \mathbb P^\torus \Big[ \big| \mathcal C_{\textup{o}}^{\torus,h}  \big| \geq (\ln(N))^\lambda \Big]   \\
&\overset{(*)}{=}  N^{\rho+d} \mathbb P^\torus \Big[ \big| \mathcal C_{\text{o}}^{\torus,h} \cap B_N^\delta \big| \geq  (\ln(N))^\lambda \Big]  \\
&\leq  N^{\rho+d} \mathbb Q_N \Big[  \big|   \mathcal C_{0}^{\lattice,h-\varepsilon} \cap \mathcal B_N^\delta \big| \geq (\ln(N))^\lambda    \Big]  \\ 
&\qquad \qquad \qquad \qquad +  N^{\rho+d} \mathbb Q_N \Big[ \big( E_{\Psi_\torus}^{\geq h} \cap B_N^\delta \big) \nsubseteq  \pi_N \big( E_{\varphi_\lattice}^{\geq h-\varepsilon} \cap \mathcal B_N^\delta \big)   \Big],
\end{split}
\end{equation}
where in $(*)$ we use that $\textup{o} \in \torus$ is at distance larger or equal $\frac{N-N^\delta-2}{2} \geq (\ln(N))^\lambda$ from the boundary of the box $B_N^\delta$. The second term on the right hand side of \eqref{3.3} converges to zero by \eqref{2.12}. It remains to control the first term. By a union bound and \cite{13}, Theorem 2.1, one obtains
\begin{equation*}
\begin{split}
N^{\rho+d} \mathbb Q_N \Big[ & \big|   \mathcal C_{0}^{\lattice,h-\varepsilon} \cap \mathcal B_N^\delta \big| \geq (\ln(N))^\lambda  \Big] \\
&\leq N^{\rho+d} \mathbb P^\lattice \Big[  0 \xleftrightarrow{\varphi_\lattice \geq h-\varepsilon} x \text{ for some $x\in \lattice$ with } d_\lattice(0,x) = \frac12\lfloor (\ln(N))^{\frac{\lambda}d}  \rfloor       \Big]  \\
&\leq \begin{cases} N^{\rho+d} (\ln(N))^{\frac{\lambda}d (d-1)} c_h \exp \big(-c'_h (\ln(N))^{\lambda / d} \big),  &\text{if } d\geq 4   \\
				N^{\rho+d} (\ln(N))^{\frac{\lambda}d (d-1)} c_h \exp \Big(-c'_h \frac{(\ln(N))^{\lambda / d}}{(\ln(\ln(N))^{\lambda / d
			})^6} \Big) ,  &\text{if } d=3      \end{cases}   \\
&\xrightarrow[\lambda>d]{N\to \infty} 0
\end{split}
\end{equation*}
and the proof is complete.
\end{proof}

In the next theorem, $\text{diam}\big(\mathcal C_{\text{o}}^{\torus,h} \big)$ stands for the extrinsic diameter of the connected component of $E_{\Psi_\torus}^{\geq h}$ containing o$\ =\pi_N(0) \in \torus$, i.e.
\begin{equation*}
\text{diam}\big(\mathcal C_{\text{o}}^{\torus,h}\big) \coloneqq \max \Big\{ d_\torus(x,y) \, \Big| \, x,y \in \mathcal C_{\text{o}}^{\torus,h} \Big\} \leq N.
\end{equation*}

\begin{theorem}[supercritical phase, $h<h_\star$]   \label{theorem3.4}
For all $h<h_\star$ it holds that
\begin{equation}  \label{3.4}
\lim_{N\to\infty} \mathbb P^\torus \Big[ \textup{diam}\Big(\mathcal C_{\textup{o}}^{\torus,h}\Big) \geq \frac{N}4  \Big] = \mathbb P^\lattice \big[ 0 \xleftrightarrow{\varphi_\lattice \geq h} \infty \big] > 0
\end{equation}
(the limit actually holds for $h>h_\star$, too; in that case it  equals  zero).
\end{theorem}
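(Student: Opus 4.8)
The plan is to trap $q_N:=\mathbb P^\torus[\textup{diam}(\mathcal C^{\torus,h}_{\textup o})\geq N/4]$ between two $\lattice$-connection probabilities at the shifted levels $h\pm\varepsilon$, via the couplings $\mathbb Q_N$ of Corollary~\ref{corollary2.3}, and then to let $\varepsilon\downarrow 0$. Fix any $\delta\in(\frac12,1)$; given $\varepsilon>0$, Corollary~\ref{corollary2.3} provides an event of $\mathbb Q_N$-probability $1-o(1)$ on which
\begin{equation*}
\pi_N\big(E_{\varphi_\lattice}^{\geq h+\varepsilon}\cap\mathcal B_N^\delta\big)\ \subseteq\ E_{\Psi_\torus}^{\geq h}\cap B_N^\delta\ \subseteq\ \pi_N\big(E_{\varphi_\lattice}^{\geq h-\varepsilon}\cap\mathcal B_N^\delta\big).
\end{equation*}
I will use two elementary geometric facts: for $N$ large $\mathcal B_N^\delta$ (side length $\approx N-N^\delta>N/2$) contains the $\ell^1$-ball $\{x\in\lattice:d_\lattice(0,x)\leq\lceil N/4\rceil\}$, and every $\widehat y\in\mathcal B_N^\delta$ has all coordinates of absolute value $<N/2$, so $d_\torus(\textup o,\pi_N(\widehat y))=d_\lattice(0,\widehat y)$.

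For the lower bound, suppose $0\xleftrightarrow{\varphi_\lattice\geq h+\varepsilon}x$ for some $x$ with $d_\lattice(0,x)\geq N/4$. Cutting a connecting path at its first visit to $\ell^1$-distance $\lceil N/4\rceil$ yields a nearest-neighbour path inside $E_{\varphi_\lattice}^{\geq h+\varepsilon}$ contained in the ball of that radius, hence in $\mathcal B_N^\delta$; on the coupling event its $\pi_N$-image (a genuine path, as $\pi_N$ is a graph homomorphism that is injective on $\mathcal B_N^\delta$) lies in $E_{\Psi_\torus}^{\geq h}$ and joins $\textup o$ to some $y$ with $d_\torus(\textup o,y)=\lceil N/4\rceil$, so $\textup{diam}(\mathcal C^{\torus,h}_{\textup o})\geq N/4$. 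Since $\mathbb P^\lattice[\exists x:d_\lattice(0,x)\geq N/4,\ 0\xleftrightarrow{\varphi_\lattice\geq h+\varepsilon}x]\downarrow\mathbb P^\lattice[0\xleftrightarrow{\varphi_\lattice\geq h+\varepsilon}\infty]$ as $N\to\infty$, this gives $\liminf_N q_N\geq\mathbb P^\lattice[0\xleftrightarrow{\varphi_\lattice\geq h+\varepsilon}\infty]$.

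For the upper bound, if $\textup{diam}(\mathcal C^{\torus,h}_{\textup o})\geq N/4$ then, since $\textup o\in\mathcal C^{\torus,h}_{\textup o}$, the triangle inequality yields $z\in\mathcal C^{\torus,h}_{\textup o}$ with $d_\torus(\textup o,z)\geq N/8$. Cutting a connecting $\torus$-path at its first visit to $d_\torus(\textup o,\cdot)=\lceil N/8\rceil$ keeps it inside the $\torus$-ball of that radius, which for $N$ large equals $\pi_N(\{x:d_\lattice(0,x)\leq\lceil N/8\rceil\})$ with $\{x:d_\lattice(0,x)\leq\lceil N/8\rceil\}\subseteq\mathcal B_N^\delta$; by the right inclusion the path then lies in $\pi_N(E_{\varphi_\lattice}^{\geq h-\varepsilon}\cap\mathcal B_N^\delta)$. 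Its lift under $\widehat{(\cdot)}$ is again a nearest-neighbour path (consecutive lifted vertices differ by less than $N/2$ in each coordinate and project to a unit vector, hence are $\lattice$-neighbours) lying in $E_{\varphi_\lattice}^{\geq h-\varepsilon}$ and reaching $\ell^1$-distance $\lceil N/8\rceil$ from $0$. Hence $q_N\leq\mathbb P^\lattice[\exists x:d_\lattice(0,x)\geq N/8,\ 0\xleftrightarrow{\varphi_\lattice\geq h-\varepsilon}x]+o(1)$, so $\limsup_N q_N\leq\mathbb P^\lattice[0\xleftrightarrow{\varphi_\lattice\geq h-\varepsilon}\infty]$.

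Combining, $\mathbb P^\lattice[0\xleftrightarrow{\varphi_\lattice\geq h+\varepsilon}\infty]\leq\liminf_N q_N\leq\limsup_N q_N\leq\mathbb P^\lattice[0\xleftrightarrow{\varphi_\lattice\geq h-\varepsilon}\infty]$ for all small $\varepsilon>0$. If $h>h_\star$, choosing $\varepsilon$ with $h-\varepsilon>h_\star$ makes the right side $0$, so $q_N\to 0=\mathbb P^\lattice[0\xleftrightarrow{\varphi_\lattice\geq h}\infty]$, the parenthetical claim. If $h<h_\star$, the definition of $h_\star$ and the monotonicity of $h\mapsto\mathbb P^\lattice[0\xleftrightarrow{\varphi_\lattice\geq h}\infty]$ give $\mathbb P^\lattice[0\xleftrightarrow{\varphi_\lattice\geq h}\infty]>0$; letting $\varepsilon\downarrow 0$, both outer members converge to $\mathbb P^\lattice[0\xleftrightarrow{\varphi_\lattice\geq h}\infty]$ \emph{provided} this function is continuous at $h$. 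This continuity of the level-set percolation probability on $(-\infty,h_\star)$ is the only external input I would need to invoke (it follows from uniqueness of the infinite level-set cluster, cf.~\cite{7}); together with the care required to keep all path surgeries strictly inside $\mathcal B_N^\delta$, away from the `thin' region of $B_N^\delta$ opposite $\textup o$, I expect it to be the main point to get right, the transfer of connections through $\mathbb Q_N$ being otherwise routine given Corollary~\ref{corollary2.3}.
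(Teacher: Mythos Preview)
Your proposal is correct and follows essentially the same approach as the paper: sandwich the torus probability between $\eta(h+\varepsilon)$ and $\eta(h-\varepsilon)$ via the coupling of Corollary~\ref{corollary2.3}, then let $\varepsilon\downarrow 0$ using continuity of $\eta$ away from $h_\star$ (the paper supplies this as Lemma~A.1 in its appendix, proved from uniqueness of the infinite cluster exactly as you anticipate). The only presentational difference is that the paper replaces your explicit path-cutting at radii $\lceil N/4\rceil$ and $\lceil N/8\rceil$ by the identities $\{\textup{diam}(\mathcal C_{\textup o}^{\torus,h})\geq N/4\}=\{\textup{diam}(\mathcal C_{\textup o}^{\torus,h}\cap B_N^\delta)\geq N/4\}$ and the analogous $\lattice$-version, which lets them keep the threshold $N/4$ on both sides; your use of $N/8$ in the upper bound (via the triangle inequality from $\textup o$) is a harmless variant that leads to the same limit.
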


\begin{remark} \label{remark3.3}
In the supercritical phase $h<h_\star$ we $\mathbb P^\lattice$-almost surely have  uniqueness of the infinite connected component contained in $E_{\varphi_\lattice}^{\geq h}$. Moreover, the percolation probability $\eta(h) \coloneqq \mathbb P^\lattice [ 0 \xleftrightarrow{\varphi_\lattice \geq h} \infty ]$ is left-continuous on $\mathbb R$ and continuous on $(-\infty,h_\star)$.
The uniqueness property of the infinite connected component follows from the Burton-Keane Theorem (see \cite{8}, Theorem 12.2). The continuity properties of $\eta(\cdot)$ can be derived along the same lines as in the classical setting of Bernoulli bond percolation (see \cite{9}, Lemma 8.9 and Lemma 8.10). The idea goes back to \cite{11}. For the reader's convenience we include a proof in the appendix, see Lemma \ref{lemmaA.1}. \qed
\end{remark}

\begin{proof}[Proof of Theorem \ref{theorem3.4}]
As above, we let  $\eta(h) = \mathbb P^\lattice [ 0 \xleftrightarrow{\varphi_\lattice \geq h} \infty ]$ denote the percolation probability. Assume $h \in \mathbb R$. We claim that for all $\varepsilon >0$ it holds that
\begin{equation} \label{3.5}
\begin{split}
\eta(h+\varepsilon) &\leq  \varliminf_{N \to \infty} \mathbb P^\torus \Big[ \textup{diam}\Big(\mathcal C_{\textup{o}}^{\torus,h}\Big) \geq \frac{N}4  \Big] \leq  \varlimsup_{N \to \infty} \mathbb P^\torus \Big[ \textup{diam}\Big(\mathcal C_{\textup{o}}^{\torus,h}\Big) \geq \frac{N}4  \Big] \leq  \eta(h-\varepsilon). 
\end{split}
\end{equation}
If one sends $\varepsilon$ to zero in \eqref{3.5}, by the continuity of $\eta(\cdot)$ on $\mathbb R\setminus \{h_\star \}$ (see Remark \ref{remark3.3} and Lemma \ref{lemmaA.1}) we recover the desired statement \eqref{3.4} (if $h>h_\star$, then $\eta(h-\varepsilon)=0$ for $\varepsilon>0$ small). Thus, it remains to prove the claim \eqref{3.5}.

Pick some $\delta \in (\frac12,1)$. Note that for large $N$ the boxes $\mathcal B_N^\delta$ and $B_N^\delta$ from \eqref{extraequation14} and \eqref{extraequation12} satisfy 
\begin{align}
\Big\{ \textup{diam}\Big(\mathcal C_0^{\lattice,h}\Big) \geq \frac{N}4 \Big\}  &=  \Big\{ \textup{diam}\Big(\mathcal C_0^{\lattice,h} \cap \mathcal B_N^\delta \Big) \geq \frac{N}4 \Big\}   \label{3.7}   \\
\Big\{ \textup{diam}\Big(\mathcal C_{\textup{o}}^{\torus,h}\Big) \geq \frac{N}4 \Big\}  &=  \Big\{ \textup{diam}\Big(\mathcal C_{\textup{o}}^{\torus,h} \cap B_N^\delta \Big) \geq \frac{N}4 \Big\}.   \label{3.6} 
\end{align} 
Consider $\varepsilon>0$. For $N$ large enough one has by Corollary \ref{corollary2.3}
\begin{align*}
\mathbb P^\torus \Big[ & \textup{diam}\Big(\mathcal C_{\textup{o}}^{\torus,h}\Big) \geq \frac{N}4  \Big]  \\
&\overset{\eqref{3.6}}{\leq} \mathbb Q_N \Big[   \textup{diam}\Big(\mathcal C_0^{\lattice,h-\varepsilon} \cap \mathcal B_N^\delta \Big) \geq \frac{N}4  \Big] +  \mathbb Q_N \Big[ \big( E_{\Psi_\torus}^{\geq h} \cap B_N^\delta \big) \nsubseteq  \pi_N \big( E_{\varphi_\lattice}^{\geq h-\varepsilon} \cap \mathcal B_N^\delta \big)   \Big]  \\
&\overset{\phantom{\eqref{3.7}}}{\leq} \mathbb P^\lattice \Big[   \textup{diam}\Big(\mathcal C_0^{\lattice,h-\varepsilon}  \Big) \geq \frac{N}4  \Big] +  \mathbb Q_N \Big[ \big( E_{\Psi_\torus}^{\geq h} \cap B_N^\delta \big) \nsubseteq  \pi_N \big( E_{\varphi_\lattice}^{\geq h-\varepsilon} \cap \mathcal B_N^\delta \big)   \Big]  \\
&\xrightarrow[\eqref{2.13}]{N\to \infty} \eta(h-\varepsilon)
\end{align*}
and similarly 
\begin{align*}
\mathbb P^\torus \Big[ & \textup{diam}\Big(\mathcal C_{\textup{o}}^{\torus,h}\Big) \geq \frac{N}4  \Big]  \\
&\overset{\phantom{\eqref{3.6}}}{\geq} \mathbb Q_N \Big[   \textup{diam}\Big(\mathcal C_0^{\lattice,h+\varepsilon} \cap \mathcal B_N^\delta \Big) \geq \frac{N}4  \Big] -  \mathbb Q_N \Big[ \pi_N \big( E_{\varphi_\lattice}^{\geq h+\varepsilon} \cap \mathcal B_N^\delta \big) \nsubseteq \big( E_{\Psi_\torus}^{\geq h} \cap B_N^\delta \big)   \Big]  \\
&\overset{\eqref{3.7}}{=} \mathbb P^\lattice \Big[   \textup{diam}\Big(\mathcal C_0^{\lattice,h+\varepsilon}  \Big) \geq \frac{N}4  \Big] -  \mathbb Q_N \Big[ \pi_N \big( E_{\varphi_\lattice}^{\geq h+\varepsilon} \cap \mathcal B_N^\delta \big) \nsubseteq \big( E_{\Psi_\torus}^{\geq h} \cap B_N^\delta \big)   \Big]    \\
&\xrightarrow[\eqref{2.13}]{N\to \infty} \eta(h+\varepsilon).
\end{align*}
This shows the claim \eqref{3.5} and concludes the proof of Theorem \ref{theorem3.4}.
\end{proof}       

\begin{remark} \label{remark3.5}
The above Theorem \ref{theorem3.4} states, in the supercritical phase $h<h_\star$, the existence with high probability of a macroscopic connected component of $E_{\Psi_\torus}^{\geq h}$ in diameter sense. It would be desirable to prove the existence in volume sense, i.e.~to show  when $h<h_\star$ that for some $\xi \in (0,1)$ it holds that
\begin{equation*}
\lim_{N\to\infty} \mathbb P^\torus \Big[ \big| \mathcal C_{\textup{max}}^{\torus,h} \big| \geq \xi N^d \Big] = 1
\end{equation*}
as counterpart to Theorem \ref{theorem3.1} for the subcritical phase $h>h_\star$. There would still be the question of the uniqueness of the giant connected component for $h<h_\star$. Such uniqueness can plausibly be obtained by going below another critical parameter $\overline{h} \leq h_\star$ (see \cite{22}, equation (5.3)) characterising a strong percolative regime of $E_{\varphi_\lattice}^{\geq h}$ (see \cite{22}, equations (5.1) and (5.2)). The equality $\overline{h}=h_\star$ is presently open. \qed
\end{remark}

\begin{acknowledgement}
The author wishes to thank A.-S.~Sznitman for suggesting the problem and for  the useful discussions  and to express his gratitude for the valuable comments made at various stages of the project. 
\end{acknowledgement}

\appendix
\section{Appendix}

For the reader's convenience we provide a proof of the continuity properties of the level-set percolation probability $\mathbb P^\lattice [ 0 \xleftrightarrow{\varphi_\lattice \geq h} \infty ]$ in $h$ along the argument of \cite{9}, Lemma 8.9 and Lemma 8.10, in the Bernoulli bond percolation setting. The general idea is due to \cite{11}.

\begin{lemma} \label{lemmaA.1}
The level-set percolation probability $\eta(h) = \mathbb P^\lattice [ 0 \xleftrightarrow{\varphi_\lattice \geq h} \infty ]$ of the Gaussian free field $\varphi_\lattice$ on $\lattice$ is left-continuous on $\mathbb R$ and continuous on $(-\infty,h_\star)$.
\end{lemma}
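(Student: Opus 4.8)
The plan is to follow the Burton–Keane/Aizenman–Kesten–Newman-style argument adapted to the Gaussian free field level sets, closely mirroring \cite{9}, Lemma 8.9 and Lemma 8.10, while exploiting the fact that the family of fields $(\varphi_\lattice \geq h)_{h\in\mathbb R}$ can be coupled monotonically on a common probability space. Concretely, I would realize $\varphi_\lattice$ once and consider the decreasing (in $h$) family of level sets $E_{\varphi_\lattice}^{\geq h}$, so that the events $\{0 \xleftrightarrow{\varphi_\lattice \geq h} \infty\}$ are decreasing in $h$ and $\eta$ is non-increasing. Left-continuity is then the soft direction: as $h\uparrow h_0$ the events $\{0 \xleftrightarrow{\varphi_\lattice \geq h}\infty\}$ decrease to $\bigcap_{h<h_0}\{0 \xleftrightarrow{\varphi_\lattice \geq h}\infty\}$, and I would check that this intersection equals $\{0 \xleftrightarrow{\varphi_\lattice \geq h_0}\infty\}$ up to a null set — an infinite self-avoiding path along which $\varphi_\lattice > h$ for every $h<h_0$ is a path along which $\varphi_\lattice \geq h_0$ — whence monotone convergence gives $\eta(h)\uparrow \eta(h_0)$, i.e. left-continuity on all of $\mathbb R$.

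For right-continuity on $(-\infty,h_\star)$, I would argue that $\eta$ has at most countably many discontinuities (being monotone), and that at a hypothetical jump point $h_0 < h_\star$ one would be forced to have, with positive probability, infinitely many distinct infinite clusters of $E_{\varphi_\lattice}^{\geq h_0}$ "appearing in the limit", contradicting uniqueness of the infinite cluster. More precisely, following \cite{9}, one introduces $\eta(h^+) = \lim_{h'\downarrow h}\eta(h')$ and shows that the probability that $0$ lies in an infinite cluster of $E_{\varphi_\lattice}^{\geq h'}$ for every $h' > h$ equals $\eta(h^+)$; if $\eta(h) > \eta(h^+)$, then with positive probability $0$ is in an infinite $h$-cluster but in no infinite $h'$-cluster for $h'>h$. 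Translating this by the ergodicity of $\varphi_\lattice$ under lattice shifts, a positive density of vertices would have this property, and by a counting/ergodic argument (the Burton–Keane-type trichotomy already invoked in Remark \ref{remark3.3} to get uniqueness) one produces, with positive probability, infinitely many infinite clusters of $E_{\varphi_\lattice}^{\geq h}$ — or else a single infinite $h$-cluster that fails to contain any infinite $h'$-cluster for all $h'>h$, again contradicting that it should be "robust" below $h_\star$. The key structural inputs are: stationarity and ergodicity of $\varphi_\lattice$ under translations of $\lattice$; the FKG inequality for the Gaussian free field (monotone events are positively correlated); and uniqueness of the infinite cluster for $h<h_\star$ (Remark \ref{remark3.3}).

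The main obstacle I anticipate is the right-continuity step, specifically making rigorous the claim that a jump in $\eta$ at $h_0<h_\star$ produces a contradiction with uniqueness. In the Bernoulli setting of \cite{9} this uses the independence of the bond states crucially; for the Gaussian free field one must replace independence with the domain-Markov/FKG structure, and one has to be careful that the "difference" of the two infinite clusters (the one at level $h_0$ versus the intersection over $h>h_0$) is genuinely supported on vertices $x$ with $\varphi_\lattice(x) = h_0$ exactly — but that event has probability zero for each fixed $x$, so the subtlety is that the relevant set of "newly disconnected" vertices is random and a priori only has the a.s.\ property of being locally finite. I would handle this by working with the events $A_h = \{0 \xleftrightarrow{\varphi_\lattice \geq h}\infty\}$ directly and the identity $\mathbb P^\lattice[\bigcap_{h'>h} A_{h'}] = \eta(h^+)$, then invoking ergodicity to pass from "$\mathbb P^\lattice[A_h \setminus \bigcap_{h'>h}A_{h'}] > 0$" to a positive-density statement, and finally the Burton–Keane argument (trifurcation points) on the configuration $E_{\varphi_\lattice}^{\geq h}$ to rule out coexistence of the limiting infinite cluster with infinitely many others. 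For the reader's convenience I will carry this out in detail, indicating at each step which ingredient (stationarity, ergodicity, FKG, uniqueness) is being used.
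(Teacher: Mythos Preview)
Your left-continuity argument works and differs slightly from the paper's, which writes $\eta(h)=\lim_{n\to\infty}\mathbb P^\lattice[0\xleftrightarrow{\varphi_\lattice\geq h}\partial_\lattice B(n)]$, observes each finite-box probability is continuous in $h$ (the marginals have densities), and concludes that $\eta$ is a decreasing limit of continuous functions, hence upper semicontinuous, hence left-continuous by monotonicity. One caveat in your version: the justification that $\bigcap_{h<h_0}A_h\subseteq A_{h_0}$ via ``an infinite self-avoiding path along which $\varphi_\lattice>h$ for every $h<h_0$'' tacitly assumes a \emph{single} path works for all $h<h_0$, which is not immediate since the paths may vary with $h$. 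The clean fix is to note that if $\mathcal C_0^{\lattice,h_0}$ is finite then its (finite) outer boundary has all values strictly below $h_0$, so $\mathcal C_0^{\lattice,h}=\mathcal C_0^{\lattice,h_0}$ for $h$ close enough to $h_0$ from below.

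Your right-continuity argument, however, has a genuine gap. You correctly reach that $A_h\setminus\bigcap_{h'>h}A_{h'}$ has positive probability, and by ergodicity a positive density of vertices share this property; but the step ``one produces infinitely many infinite clusters of $E_{\varphi_\lattice}^{\geq h}$'' is unfounded --- all those vertices lie in the \emph{unique} infinite $h$-cluster, so Burton--Keane/trifurcation yields nothing new, and your dichotomy is never actually established. The missing idea is much more direct and is precisely the observation you flag and then set aside. On the bad event (with $h_k\downarrow h$, $h_k<h_\star$), uniqueness places the infinite $h_{k_0}$-cluster inside $\mathcal C_0^{\lattice,h}$, so there is a \emph{finite} path $0=x_0,\ldots,x_n$ in $E_{\varphi_\lattice}^{\geq h}$ ending in that cluster; since $\mathbb P^\lattice$-a.s.\ $\min_i\varphi_\lattice(x_i)\neq h$ (continuous marginals), some $k_1\geq k_0$ gives $\varphi_\lattice(x_i)\geq h_{k_1}$ for all $i$, forcing $0\xleftrightarrow{\varphi_\lattice\geq h_{k_1}}\infty$ --- contradiction. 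No FKG, no domain Markov, no ergodicity beyond what is already packaged into uniqueness; your concern that Lemma 8.10 in \cite{9} relies crucially on independence is misplaced, as that argument too uses only the monotone coupling and uniqueness.
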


\begin{proof}
For $n\geq 1$ we define $B(n) \coloneqq \{-n,\ldots,n \}^d$. Then 
\begin{equation}  \label{A.1}
\eta(h) = \mathbb P^\lattice \Big[ \bigcap_{n\geq 1} \big\{  0 \xleftrightarrow{\varphi_\lattice \geq h} \partial_\lattice B(n) \big \}  \Big]  = \lim_{n\to \infty} \mathbb P^\lattice \Big[0 \xleftrightarrow{\varphi_\lattice \geq h} \partial_\lattice B(n)   \Big]. 
\end{equation}
Since $(\varphi_\lattice(x))_{x\in B(n) \cup \partial_\lattice B(n)}$ has a density (see \eqref{0.2}), the expression $\mathbb P^\lattice \big[0 \xleftrightarrow{\varphi_\lattice \geq h} \partial_\lattice B(n)   \big]$ is a continuous function of $h$. Therefore by \eqref{A.1}, $\eta(\cdot)$ is a decreasing limit of continuous functions and thus upper semicontinuous. As $\eta(\cdot)$ is a non-increasing function, it is thus left-continuous.

To show the right-continuity on $(-\infty,h_\star)$ fix $h<h_\star$ and assume $(h_k)_{k\geq0}$ is a sequence satisfying $h_k \downarrow h$ and $h_k < h_\star$ for all $k\geq 0$. As $\eta(h) \geq \eta(h_k)$ for all $k\geq 0$, we have 
\begin{equation} \label{A.2}
0 \leq \eta(h) - \lim_{k\to \infty} \eta(h_k) = \mathbb P^\lattice \Big[ 0 \xleftrightarrow{\varphi_\lattice \geq h} \infty  \text{ but } 0 \centernot{\xleftrightarrow{\varphi_\lattice \geq h_k}} \infty \text{ for all } k\geq 0 \Big] .
\end{equation}
Assume by contradiction  that the probability in \eqref{A.2} is not equal to zero. Consider any $k_0\geq 0$. Since $h<h_{k_0}<h_\star$, we have $E_{\varphi_\lattice}^{\geq h_{k_0}} \subseteq E_{\varphi_\lattice}^{\geq h}$ and by Remark \ref{remark3.3} the two level  sets $\mathbb P^\lattice$-almost surely contain  a unique infinite connected component. Therefore $\mathbb P^\lattice$-almost surely, the unique infinite connected component of $E_{\varphi_\lattice}^{\geq h_{k_0}}$, call it $\mathcal C_{\text{inf}}^{h_{k_0}}$, is contained in the unique infinite connected component of $E_{\varphi_\lattice}^{\geq h}$, which on the event $\{ 0 \xleftrightarrow{\varphi_\lattice \geq h} \infty \}$  necessarily  coincides with  $\mathcal C_0^{\lattice,h}$. 
Hence, under our assumption, we can find a (deterministic) sequence of nearest-neighbour vertices $(x_i)_{0\leq i \leq n}$ in $\lattice$ with $x_0=0$  such that
\begin{equation*}
P^\lattice \Big[ \varphi_\lattice(x_i) \geq h \text{ for } i=0,\ldots,n \text{ and } x_n \in \mathcal C_{\text{inf}}^{h_{k_0}}  \text{ and }  0 \centernot{\xleftrightarrow{\varphi_\lattice \geq h_k}} \infty \text{ for all } k\geq 0  \Big] > 0.
\end{equation*}
Since $\mathbb P^\lattice$-almost surely $\min_{i=0,\ldots,n} \varphi_\lattice(x_i) \neq h$ and $h_k \downarrow h$, it follows that for some $k_1 \geq 0$ with $h_{k_1}\leq h_{k_0}$ we have
\begin{equation*}
P^\lattice \Big[ \varphi_\lattice(x_i) \geq h_{k_1} \text{ for } i=0,\ldots,n \text{ and } x_n \in \mathcal C_{\text{inf}}^{h_{k_0}}  \text{ and }  0 \centernot{\xleftrightarrow{\varphi_\lattice \geq h_k}} \infty \text{ for all } k\geq 0  \Big] > 0,
\end{equation*}
which is a contradiction (since $\mathcal C_{\text{inf}}^{h_{k_0}} \subseteq \mathcal C_{\text{inf}}^{h_{k_1}}$). Thus, the last term of \eqref{A.2} vanishes and the proof of the right-continuity of $\eta(\cdot)$ on $(-\infty,h_\star)$ is complete.
\end{proof}


\bibliographystyle{abbrv}
\bibliography{bibliographyfile}

\end{document}